\newtheorem{theorem}{Theorem}
\newtheorem{proposition}[theorem]{Proposition}
\newtheorem{lemma}[theorem]{Lemma}
\newtheorem{definition}[theorem]{Definition}
\newtheorem{example}[theorem]{Example}
\newtheorem{remark}[theorem]{Remark}
\newcommand{\CC}{\mathbb{C}}
\newcommand{\NN}{\mathbb{N}}
\newcommand{\RR}{\mathbb{R}}
\begin{document}


\title[Fibonacci control system and redundant manipulators]{A Fibonacci control system with application to hyper-redundant manipulators}
\author{Anna Chiara Lai \and Paola Loreti \and Pierluigi Vellucci}
\address{       Dipartimento di Scienze di Base e Applicate per l'Ingegneria \\
              Sapienza Universit\`a di Roma\\
              Via Scarpa 16 - 00181 Roma
}
              \email{anna.lai@sbai.uniroma1.it}           
              \email{paola.loreti@sbai.uniroma1.it}           
              \email{pierluigi.vellucci@sbai.uniroma1.it}           


\begin{abstract} We study a robot snake model based on a discrete linear control system involving Fibonacci sequence and closely related to the theory of expansions
in non-integer bases. The present paper includes an investigation of the reachable workspace,
 a more general analysis of the control system underlying the model, its reachability and local controllability properties and
the relation with expansions in non-integer bases and with iterated function systems.
\end{abstract}
\keywords{redundant manipulators, Fibonacci sequence, discrete control, self-similar dynamics, expansions in complex bases}
\subjclass[2010]{70E60,11A63}

%
\maketitle

\section{Introduction}

The aim of this paper is to give a model of a planar hyper-redundant manipulator, that is analogous in morphology to robotic snakes and  tentacles,
based on a discrete linear dynamical system involving Fibonacci sequence.
This approach is motivated by the ubiquitous presence of Fibonacci numbers in nature (see \cite{Ball99} and \cite{Wi12}) and, in particular, in human limbs \cite{Park03}.

%


The robot proposed in the present paper is a planar manipulator with rigid links and with an arbitrarily large number of degrees of freedom,
i.e., it belongs to the class of so-called \emph{macroscopically-serial hyper-redundant manipulators} -- the term was first introduced in \cite{term}.
The device is controlled by a sequence of couples of discrete actuators on the junctions, ruling both the length of every link and the rotation with respect to the previous link.


Hyper-redundant architecture was intensively studied back to the late 60's, when the first prototype of hyper-redundant robot arm was built \cite{snake}.

The interest of researchers in devices with redundant controls is motivated by their ability to avoid obstacles and to perform new forms of robot locomotion and grasping --
see for instance \cite{Bai86}, \cite{Bur88} and \cite{CB95}.
%

Crucial as it is, effective control of hyper-redundant manipulator is difficult for its redundancy; see, for example \cite{Li04}.
For instance,  the number of points of the reachable workspace increases exponentially with the number of degrees of freedom. In this paper, we employ the self-similarity of Fibonacci sequence in order to provide alternative techniques of investigation of the reachable workspace based on combinatorics and on fractal geometry.

The main purpose of the present paper is to provide a theoretical background suitable for applications to inverse kinematic problems, in a fashion
like \cite{IC96}, where the analysis of the reachable workspace is used to design an algorithm solving the inverse kinematic problem in linear time with respect the number of actuators. Furthermore, in \cite{Kwon91}, the design of a manipulator modeling human arm and with link lengths following the Fibonacci sequence, provides a method for the self-collision avoidance problem. We believe that analogous geometrical properties can be extended to manipulators which are inspired by other biological forms, through the self-similarity induced by Fibonacci numbers.
We motivate the choice of discrete controls via their precision with low cost compared to their continuous counterparts.


Hyper-redundant manipulators considered here are planar manipulators. This is only a first step in exploring an approach that, to the best of our knowledge, could add novelty to the existing literature in this field; therefore, for future work, its extension to the three-dimensional case represents a natural progress of this paper.

We finally anticipate to the reader that the workspaces of planar manipulators of above cited papers (e.g. \cite{IC96}) are quite different from those depicted here. This
is mainly due to the fact that we represent only a subset of the workspace, corresponding to the particular subclass of \emph{full-rotation configurations} whose relation with fractal geometry is the most striking.
Furthermore, unlike above mentioned works, our robotic device has a telescopic structure modeled by the possibility of ruling not only the angle between but also the length of each link: we believe this additional feature to possibly affect the shape of the workspace.

\vskip0.5cm
The theoretical background relies on the theory of Iterated Function Systems  -- see \cite{Fal90} for a general introduction on the topic. The approach proposed here is inspired by the relation between
robotics and theory of expansions in non-integer bases, that was first introduced in \cite{CP01} and later applied to planar manipulators in \cite{LL11}, \cite{LL11a},\cite{LL12} and \cite{LLV14}).
 For an overview on the expansions in non-integer bases we refer to the R\`enyi's seminal paper \cite{Ren57} and to  the papers \cite{Par60} and \cite{EK98}.
 For the geometrical aspects of the expansions in complex base, namely the arguments that are more related to problem studied here, we refer to the papers \cite{Gil81},\cite{Gil87},\cite{IKR92} and to \cite{KL07}. The techniques developed in the present paper in order to study the full-rotation configuration
generalize previous results in \cite{Lai11}.


\subsection{Brief description of the main results}

A discrete dynamical system models the position of the extremal
junction of the manipulator. The model includes two binary control parameters on every link. The first control parameter, denoted by $u_n$, rules the length of
the $n$-th link $l_n:=u_n f_n q^{-n}$, where $f_n$ is the $n$-th Fibonacci number and $q$ is a constant scaling ratio, corresponding to the choice of $u_n=0$ and
$u_n=1$, respectively. The
other control, $v_n$, rules the angle between the current link and the
previous one, denoted by $\omega_n:=(\pi-\omega) v_n$, where $\omega$ a fixed angle in $(0,\pi)$. Therefore when $v_n=0$,  the $n$-th link
is collinear with the previous, and when $v_n=1$, it forms a fixed angle $\pi-\omega\in(0,\pi)$ with the $n-1$-th link.  In Section \ref{smodel} we show that, under these assumptions, the position of the $n$-th junction, $x_n(\mathbf u,\mathbf v)$ is ruled by the relation
\begin{equation}
x_{n}(\mathbf u,\mathbf v)=x_{n-1}+u_n\frac{f_n}{q^n}e^{-i\omega \sum_{h=0}^{n} v_h}
\end{equation}
where $\mathbf u=(u_j),~\mathbf v=(v_j)\in\{0,1\}^\infty$.  By assuming that the $n$-th junction is positioned at time $n$ (namely by reading the index $n$ as a discrete time variable) above equation may be reinterpreted as a discrete control system, whose trajectories model the configurations of the manipulator. This is a stationary problem: indeed, at this stage of the investigation we are interested on the reachable workspace of the manipulator (namely a static feature of robot) rather than its kinematics. In this setting, if the number of the links is finite, say it is equal to $N$, then the position of the end effector of the manipulator (i.e., the position of its extremal junction) is represented
by $x_N(\mathbf u,\mathbf v)$. We call \emph{reachable workspace} the set
$$W_{N,q,\omega}:=\{x_N(\mathbf u,\mathbf v)\mid \mathbf u,\mathbf v\in\{0,1\}^N\}.$$
By allowing an infinite number of the links, we also may introduce the definition of \emph{asymptotic reachable workspace}
$$W_{\infty,q,\omega}:=\{\lim_{N\to\infty} x_N(\mathbf u,\mathbf v)\mid \mathbf u,\mathbf v\in\{0,1\}^\infty\}.$$

The first main results, Theorem \ref{L} and Theorem \ref{workspace}, deal with some asymptotic controllability properties of the manipulator.

  Indeed, the investigation begins with the study of the quantity
 $$L(\mathbf u):=\sum_{n=1}^\infty \frac{u_n f_n}{q^n}$$
 namely of the \emph{total length} of the manipulator\footnote{Notice that $L(\mathbf u)=L(\mathbf u,\mathbf v)$ for all $\mathbf v\in \{0,1\}^\infty$ where $L(\mathbf u,\mathbf v):=\sum_{n=1}^\infty |x_n(\mathbf u,\mathbf v)-x_{n-1}(\mathbf u,\mathbf v)|$}.
 First  of all we notice that the condition $q>\varphi$, where $\varphi=(1+\sqrt{5})/2$ is the Golden Ratio, ensures the convergence of above series.
 Theorem \ref{L} is a first investigation of the behaviour of the set of possible total lengths
 $$L_{\infty,q}:=\{L(\mathbf u)\mid \mathbf u\in\{0,1\}^\infty\}$$
 as $q\to \infty$. In particular we show that if $q$ is lower or equal to the value $1+\sqrt{3}$ then $L_{\infty,q}$ is an interval. This estimate is sharp,
 indeed we shall also prove that when $q>1+\sqrt{3}$ then $L_{\infty,q}$ is a disconnected set.
  In other words, Theorem \ref{L} states that  we can arbitrarily set the length of manipulator within the range $[0,L(\mathbf 1)]$ (where we have set $\mathbf 1:=(1,1,\dots,1,...)$)
 if and only if the scaling ratio $q$ belongs to the range $(\varphi, 1+\sqrt{3}]$. The proof of Theorem \ref{L} is constructive and an explicit algorithm is given.

 Theorem \ref{L} turns out to be also a useful tool in order to prove sufficient conditions for the \emph{local asymptotic controllability} of the control system underlying the model (see Theorem \ref{workspace}), that is the possibility of place the end effector of the manipulator arbitrarily close to any point belonging to a sufficiently small neighborhood of the origin.
 More precisely, Theorem \ref{workspace} states that, under some technical assumptions (namely we assume the that the maximal rotation angle $\omega$ is of the form $2d\pi/p$ for some $d,p\in\NN$), if $q$ 
   belongs to a certain range, then the asymptotic reachable workspace contains a neighborhood of the origin\footnote{Actually, we prove that such a neighborhood is indeed a polygon which is symmetric with respect to the origin.}.

 The approach in the investigation of $L_{\infty,q}$ and $R_{\infty,q,\omega}$,  the latter defined as
 $$R_{\infty,q,\omega}:=\left\{\sum_{k=0}^\infty u_k \frac{f_k}{q^ke^{i\omega k}} \mid \mathbf u\in\{0,1\}^\infty\right\},$$
  strongly relies on the particular choice of the lengths of the links, $l_n(u_n):=u_n f_n q^{-n}$, and in particular, on the fact that, fixing $\mathbf u=(u_n)$ the ''backward'' sequence $\bar L_n( \mathbf u)=\sum_{j=1}^n l_j(u_{n-j})$ satisfies the recursive, contractive relation
 \begin{equation}\label{rec}
 \bar L_{n+1}(\mathbf u)=\frac{u_{n}+\bar L_n(\mathbf u)}{q}+\frac{\bar L_{n-1}(\mathbf u)}{q^2}.
 \end{equation}
%

A suitable generalization of (\ref{rec}) is interpreted as a discrete control dynamical system, the \emph{Fibonacci control system}, which is investigated by means of combinatorial arguments.

    We then use a generalization of above approach in order to study a suitable subset of $R_{\infty,q,\omega}$, the set of \emph{full-rotation configurations} (namely the configurations corresponding to the choice $\mathbf v=\mathbf 1$).
   This approach is motivated by the fact that the full-rotation configurations satisfy a contractive, recursive relation similar to (\ref{rec}).

  The third main result of the present paper concerns the characterization of $L_{\infty,q,\omega}$ and the set of full-rotation configurations in terms of the attractor of a suitable Iterated Function System (IFS). This approach gives access to well-established results in fractal geometry in order to further investigate the topological properties of
  the reachable workspace, and to use known efficient algorithms for the generation of self-similar sets (e.g. Random Iteration Algorithm) to have a numerical approximation of the asymptotic reachable set.

 In what follows we show some numerical simulations approximating the asymptotic reachable set associated with full-rotation configurations. However a deeper exploiting of these potential applications is beyond the purposes of present work.

  \vskip0.5cm
  We finally remark that for all $N\geq 0$ we have the inclusion $W_{N,q,\omega}\subset W_{\infty,q,\omega}$ and, consequently, the Hausdorff distance between $W_{N,q,\omega}$ and $W_{\infty,q,\omega}$ satisfies
   \begin{align*}
 d_H(W_{N,q,\omega},W_{\infty,q,\omega})&=\sup_{x_\infty\in W_{\infty,q,\omega}} \inf_{x_N\in W_{N,q,\omega}} |x_\infty-x_N|\\
 &\leq \sum_{k=N+1}^\infty \frac{f_{k}}{q^{k}}\leq\frac{q}{q^N(q^2-q-1)}.
   \end{align*}
  Above relation establishes a global error estimate for the approximation of $W_{\infty,q,\omega}$ with $W_{N,q,\omega}$, hence every above mentioned asymptotic controllability property is inherited by a practical implementable manipulator with a finite number of links $N$ by paying an explicitly given, exponential decaying cost in terms of precision.

\subsection{Organization of the paper}
In Section \ref{smodel} we introduce the model and we state the main results on the density of the reachable workspace. The remaining part of the paper is devoted to the analysis of the
dynamical system underlying the model. Section \ref{ssectioncontrol} is devoted to the introduction of such Fibonacci control system and to its preliminary properties. In Section \ref{sreachability} and Section \ref{slocal}
we establish some properties of reachability and local controllability.  Finally in Section \ref{sifs} we establish a relation with the theory of Iterated Function Systems and we point out some parallelisms with
classical expansions in non-integer bases.

\tableofcontents
\section{A model for a snake-like robot.}\label{smodel}
Throughout this section we introduce a model for a snake-like robot.
We assume links and junctions to be thin, so to be respectively approximate with their middle axes and barycentres.
 We also assume axes and barycentres to be coplanar and, by employing the isometry between $\RR^2$, we use the symbols $x_0,x_1,..., x_n\in \CC$
 to denote the position of the barycentres of the junctions, therefore the length $l_n$ of the $n$-th link is
\begin{equation}\label{l}
 l_n=|x_{n}-x_{n-1}|
\end{equation}

We assume $l_n$ to be ruled by a binary control $u_n$, and in particular,
\begin{equation}\label{ln}
l_n:=u_n\frac{f_n}{q^n}.
\end{equation}
where $(f_n)$ is Fibonacci sequence, namely $f_0=f_1:=1$ and $f_{n+2}=f_{n+1}+f_n$ for all $n\geq 0$.

Now, consider the quantity
$$L(\mathbf u)=\sum_{n=0}^\infty l_n(u_n)\quad \text{with } \mathbf u=(u_n)\in\{0,1\}^\infty$$
representing the total length of the configuration of the snake-like robot corresponding to the control $\mathbf u$.
\begin{remark}
 In order to simplify subsequent notations we shall fix as the base of the manipulator the point $x_{-1}=0$, so that the $0$-th link is well defined and it may be of length either $0$ or $1$.
\end{remark}

 We shall also use the quantity
\begin{equation}\label{defS}
S(q,h,p):=\sum_{k=0}^\infty\frac{f_{pk+h}}{q^{pk}}.
\end{equation}


The most general form of this definition will be used only in Section \ref{slocal}. At this stage, it is useful to introduce for brevity the notation
\begin{equation}\label{s1def}
S(q):=S(q,0,1)= \sum_{n=0}^\infty \frac{f_{n}}{q^n}= \begin{cases}
           \displaystyle{ \frac{q^2}{q^2-q-1}} \quad &\text{ if }q>\varphi;\\\\
            +\infty \quad &\text{ if }  q\in (0,\varphi]
           \end{cases}
\end{equation}
where $\varphi:=\frac{1+\sqrt{5}}{2}$ denotes the Golden Mean.

\begin{remark}
 If $q>\varphi$ then for every $\mathbf u\in\{0,1\}^\infty$, one has $L(\mathbf u)\in[L(\mathbf 0), L(\mathbf 1)]=[0, S(q)].$
\end{remark}

In what follows we show that if the scaling ratio $q$ belongs to a fixed interval and if we allow the number of links to be infinite, then we may constraint the total length of
the snake-like robot $L(\mathbf u)$ to be any value in the interval $[0,S(q)]$.
\begin{theorem}\label{L}
 If $q\in(\varphi,1+\sqrt{3}]$ then for every $\bar L\in [0,S(q)]$ there exists a binary control sequence $\mathbf u\in\{0,1\}^\infty$ such that
$$L(\mathbf u)=\bar L.$$
\end{theorem}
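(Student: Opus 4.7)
The plan is to run a greedy-style digit-extraction on $\bar L$: at each step either ``spend'' the current weight $f_n/q^n$ or skip it, according to whether the residual is large enough, and then show that this forces the residual to zero. The whole argument rests on a single \emph{tail-dominance} inequality whose only binding instance occurs at $n=0$ and yields exactly the sharp bound $q\leq 1+\sqrt{3}$.

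\medskip
\noindent\textbf{Step 1 (greedy construction).} Given $\bar L\in[0,S(q)]$, set $r_0:=\bar L$ and, recursively for $n\geq 0$,
\[
u_n:=\begin{cases} 1 & \text{if } r_n\geq f_n/q^n,\\ 0 & \text{otherwise},\end{cases}\qquad r_{n+1}:=r_n-u_n\,\frac{f_n}{q^n}.
\]
By definition $r_n\geq 0$ for every $n$, and $\bar L-\sum_{k=0}^{n-1}u_k f_k/q^k = r_n$.

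\medskip
\noindent\textbf{Step 2 (tail-dominance inequality).} I would prove that, for every $q\in(\varphi,1+\sqrt{3}]$ and every $n\geq 0$,
\[
\frac{f_n}{q^n}\leq \sum_{k=n+1}^{\infty}\frac{f_k}{q^k}.
\]
The generating function $G(x)=1/(1-x-x^2)$ and the Fibonacci recursion give the shifted identity $\sum_{j\geq 0}f_{m+j}x^j=(f_m+f_{m-1}x)/(1-x-x^2)$, whence, evaluating at $x=1/q$,
\[
\sum_{k=n+1}^{\infty}\frac{f_k}{q^k}=\frac{qf_{n+1}+f_n}{q^n(q^2-q-1)}.
\]
The required inequality reduces to $f_n(q^2-q-2)\leq q f_{n+1}$. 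For $n\geq 1$, using $f_{n+1}=f_n+f_{n-1}$, this is equivalent to $f_n(q^2-2q-2)\leq q f_{n-1}$, which is trivially true since $q^2-2q-2\leq 0$ on the stated range. For $n=0$ it collapses to $q^2-2q-2\leq 0$, the sharpness threshold $q\leq 1+\sqrt{3}$.

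\medskip
\noindent\textbf{Step 3 (invariant).} I would show by induction on $n$ that
\[
0\leq r_n\leq \sum_{k=n}^{\infty}\frac{f_k}{q^k}.
\]
The base case is the hypothesis on $\bar L$. For the inductive step: if $u_n=0$, then $r_{n+1}=r_n<f_n/q^n\leq\sum_{k>n}f_k/q^k$ by Step 2; if $u_n=1$, then $r_{n+1}=r_n-f_n/q^n\geq 0$ and, by the inductive hypothesis, $r_{n+1}\leq \sum_{k\geq n}f_k/q^k-f_n/q^n=\sum_{k>n}f_k/q^k$.

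\medskip
\noindent\textbf{Step 4 (passage to the limit).} Since $q>\varphi$, the tail $\sum_{k\geq n}f_k/q^k$ tends to zero, so $r_n\to 0$ by Step~3. Therefore $L(\mathbf u)=\lim_{n\to\infty}\sum_{k=0}^{n-1}u_k f_k/q^k=\bar L-\lim_{n\to\infty}r_n=\bar L$.

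\medskip
The only real obstacle is Step~2: every other step is a routine greedy argument. The noteworthy point is that the inequality $f_n(q^2-q-2)\leq qf_{n+1}$ is automatically satisfied for $n\geq 1$ once $q^2-2q-2\leq 0$, so it is the degenerate case $n=0$ (where the factor $f_{n-1}$ is absent and one loses the ``gain'' from the Fibonacci recursion) that dictates the sharp threshold $1+\sqrt{3}$.
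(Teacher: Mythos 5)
Your proposal is correct and is essentially the paper's own argument (Lemma~\ref{lbound} plus Theorem~\ref{reachability}): your unscaled residual $r_n$ is the paper's residual divided by $q^n$, your tail-dominance inequality $f_n/q^n\leq\sum_{k>n}f_k/q^k$ is exactly the condition $f_n\leq S(q,n+1)/q$ of Lemma~\ref{lbound}, and your invariant and passage to the limit match the paper's steps (\ref{bound}) and the concluding limit computation. The only cosmetic difference is that the paper tracks the rescaled remainder $r_{h+1}=q(r_h-u_hf_h)$ and states the threshold via the quadratic in $q$ with coefficient $f_{h+1}/f_h$, whereas you isolate the binding case $n=0$ directly.
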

\begin{remark}
The proof of Theorem \ref{L} is postponed  to Section \ref{sreachability} below.
\end{remark}


We now continue the building of the model. In view of (\ref{l}), if $x_0=0$ one has for every $n$
\begin{equation}\label{xn}
x_n(\mathbf u)=\sum_{k=0}^n u_k \frac{f_k}{q^ke^{i\omega_k}},
 \end{equation}
where $-\omega_{k}\in(-\pi,\pi]$ is the argument of $x_{k}-x_{k-1}$ for $k=1,\dots,n$ and, consequently,
it represents the orientation of the $k$-th link with respect to the global reference system given by the real and imaginary axes.

\begin{example}
If the angle between two consecutive links is constantly equal to $\pi-\omega\in[0,2\pi)$, then $\omega_n=n\omega \mod (-\pi,\pi]$.
\end{example}

So far we introduced a control sequence ruling the length of each link. We now endow the model with another binary control sequence $\mathbf v=(v_n)$, ruling the angle between two consecutive links.
 In the model, the angle between two consecutive links is either $\pi$ or $\pi-\omega$ for some fixed $\omega \in (0,\pi)$.
 If $v_n = 0$ then the angle between the $n-1$-th link and the $n$-th link is $\pi$, while if $v_n = 1$
then the angle between the $n-1$-th link and the $n$-th link is $\pi-\omega$ so that
\begin{equation}
v_n=
 \begin{cases}
  1 \ \ \emph{rotation of the angle $\omega$ of the n-th link;}\\
  0 \ \ \emph{no rotation.}
 \end{cases}
\end{equation}

We notice that, under these assumptions, $\omega_n=\omega_n(\mathbf v)$ in (\ref{xn}) is indeed a controlled quantity, while $L(\mathbf u)$ is yet independent from $\mathbf v$.

\begin{proposition}\label{pomega}
Let $n \geq 0$ and $u_j=1$ and $v_j \in \{0,1\}$ for $j = 1,...,n$. Then
\begin{equation}
\label{omega}
\omega_n=\sum_{j=1}^{n}v_j \omega~\mod (-\pi,\pi]
\end{equation}
\end{proposition}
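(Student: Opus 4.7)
My plan is to prove the statement by induction on $n$, using the geometric definition of $\omega_k$ as (minus) the argument of the $k$-th displacement vector $x_k - x_{k-1}$, together with the assumption $u_j = 1$ for $j=1,\dots,n$, which guarantees each link has positive length and hence a well-defined direction.

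For the base case $n=0$, the empty sum on the right-hand side is $0$, and the $0$-th link runs from $x_{-1}=0$ to $x_0 = f_0/q^0 = 1$, whose argument is $0$, so $\omega_0 = 0 \mod (-\pi,\pi]$. For the inductive step, I will assume $\omega_{n-1} = \omega \sum_{j=1}^{n-1} v_j \mod (-\pi,\pi]$ and examine the two possible values of the binary control $v_n$. If $v_n = 0$, the angle at the junction $x_{n-1}$ equals $\pi$, so the $n$-th link is collinear with (and in the same direction as) the $(n-1)$-th link, giving $\omega_n \equiv \omega_{n-1} \mod(-\pi,\pi]$, which matches the claim because $v_n\omega = 0$. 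If $v_n = 1$, the interior angle is $\pi - \omega$, so the direction of the $n$-th link is obtained from the direction of the $(n-1)$-th link by a rotation of angle $-\omega$ in the complex plane (consistent with the sign convention $\arg(x_k-x_{k-1})=-\omega_k$ fixed in equation (\ref{xn})), yielding $\omega_n \equiv \omega_{n-1} + \omega \mod(-\pi,\pi]$, as required.

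Combining the two cases closes the induction. The only subtlety is keeping track of signs and the modular reduction: the rotation convention is already fixed by the formula $x_n = x_{n-1} + u_n(f_n/q^n)e^{-i\omega_n}$ derived from (\ref{xn}), so the increment by $\omega$ (rather than $-\omega$) in the case $v_n=1$ is forced. I expect this bookkeeping — rather than any genuinely hard step — to be the main (minor) obstacle; the hypothesis $u_j=1$ merely ensures that each $\omega_j$ is well defined along the recursion.
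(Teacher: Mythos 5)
Your proof is correct and follows essentially the same route as the paper: both derive the one-step recursion $\omega_n=\omega_{n-1}+v_n\omega \bmod(-\pi,\pi]$ from the geometric condition that the interior angle at the junction is $\pi-v_n\omega$, and then iterate. The only cosmetic difference is that you split into the cases $v_n=0$ and $v_n=1$, whereas the paper encodes both at once in the single relation $\mathrm{Arg}(x_{n+1}-x_n)-\mathrm{Arg}(x_{n-1}-x_n)\equiv -v_{n+1}\omega$.
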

\begin{proof}
We adopt the notation $Arg(z)\in(-\pi,\pi]$ to represent the principal value of the argument function $arg(z)$. In view of (\ref{xn})
\begin{equation}\label{wn}
w_{n+1} = -\text{Arg}(x_{n+1}(\mathbf u)-x_{n}(\mathbf u)).
\end{equation}
On the other hand,  $x_n$ is the vertex of the angle between the $n$-th link and the $n + 1$-th link, therefore we have the relations
\begin{equation}
\label{arg2}
\text{Arg}(x_{n+1}(\mathbf u) - x_n (\mathbf u)) - \text{Arg}(x_{n-1}( \mathbf u) - x_n(\mathbf u))~ \mod(-\pi,\pi]= -v_{n+1}\omega
\end{equation}
By a comparison between (\ref{wn}) and (\ref{arg2}) we get
\begin{equation}
w_{n+1}= w_n+v_{n+1}\omega~\mod(-\pi,\pi].
\end{equation}
and, consequently, the claim.
\end{proof}

\begin{remark}
We notice that if $u_n=0$ then any choice of $\omega_n(\mathbf v)$ satisfies \ref{xn}.
So, if the link is not extended,
 the rotation of the angle is meant as a rotation of the
 reference frame of the link.

 For example, if $v_{n}=v_{n+1}=u_{n-1}=u_{n+1}=1$ and $u_n=0$, one has that $x_{n-1}=x_{n}$ but the angle formed by the $n-1$-th junction and the $n+1$-th junction
is $\pi-2\omega$.
\end{remark}

In view of Proposition \ref{pomega}  and of above Remark, we set $\omega_n(\mathbf v):=\sum_{j=0}^n v_j\omega$, so that the complete control system for the joints of manipulator reads:
\begin{equation}
\label{eq:pier}
x_n(\mathbf u,\mathbf v)=
\sum_{k=0}^n u_k\frac{f_k}{q^k }e^{-i\omega \sum_{j=0}^{k}v_{j}}.
\end{equation}

The second main result describes the topology of the asymptotic reachable workspace when the rotation angle $\omega$ is rational with respect to $\pi$, namely it satisfies
$\omega=2\pi \frac{d}{p}$ for some $d,p\in \NN$. One has a local controllability result when the scaling ratio $q$ is lower than a threshold depending on $p$, that we denote $q(p)$. In particular
$q(p)$ is defined as the greatest real solution of the equation
$$\sum_{k=0}^\infty\frac{f_{pk}}{q^{pk}}=2.$$
In Section \ref{sqp} below we give a closed formula for $q(p)$.
\begin{theorem}\label{workspace}
 If $\omega=2\pi\frac{d}{p}$ for some $d,p\in \NN$ and if $q\in (\varphi, q(p)]$ then the asymptotic reachable workspace
$$W_{\infty,q,\omega}:=\left\{\lim_{n\to\infty} x_n(\mathbf u,\mathbf v)\mid \mathbf u,\mathbf v\in\{0,1\}^\infty\right\}$$
contains a neighborhood of the origin.
\end{theorem}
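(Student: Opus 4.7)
\emph{Plan.} The idea is to restrict to the full-rotation subfamily $\mathbf v=\mathbf 1$, to group the terms of the resulting complex series by residue class modulo $p$, and to recognize each resulting sub-series as an independently controllable scalar quantity. The $p$ equally spaced unit vectors carrying these scalar contributions will then assemble into a centrally symmetric zonotope around the origin.

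\emph{Decomposition and independent sub-controls.} Plugging $\mathbf v=\mathbf 1$ into (\ref{eq:pier}), the factor $e^{-i(k+1)\omega}$ depends only on $(k+1)\bmod p$ because $\omega=2\pi d/p$; writing $k=pj+m$ with $0\le m\le p-1$ and using $e^{-ip\omega}=1$ one gets
$$
x_\infty(\mathbf u,\mathbf 1)=e^{-i\omega}\sum_{m=0}^{p-1}e^{-im\omega}\,A_m(\mathbf u),\qquad A_m(\mathbf u):=\sum_{j=0}^\infty u_{pj+m}\,\frac{f_{pj+m}}{q^{pj+m}}.
$$
The sub-series $A_0,\dots,A_{p-1}$ depend on disjoint blocks of coordinates of $\mathbf u$, so they can be prescribed independently. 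To mimic Theorem \ref{L} for each $A_m$, I would run a greedy algorithm on the decreasing sequence $(f_{pj+m}/q^{pj+m})_j$: it hits every target in $[0,\alpha_m]$, with $\alpha_m:=\sum_{j\ge 0} f_{pj+m}/q^{pj+m}>0$, as soon as each term is dominated by its tail. The worst case of this inequality occurs at $j=0$, $m=0$ and reads
$$
1\le\sum_{k\ge1}\frac{f_{pk}}{q^{pk}},
$$
which is exactly $S(q,0,p)\ge 2$, i.e., $q\le q(p)$; the remaining cases turn out to be strictly weaker using Binet's formula and $\varphi/q<1$.

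\emph{Zonotope argument.} Set $\alpha:=\min_m \alpha_m>0$. By the previous step the reachable set contains $e^{-i\omega}\cdot Z$, where
$$
Z:=\Bigl\{\sum_{m=0}^{p-1}a_m e^{-im\omega}\,:\,0\le a_m\le\alpha\Bigr\}.
$$
Reducing $d/p$ to lowest terms (which is allowed because $\omega\in(0,\pi)$ forces the reduced denominator to be at least $3$), we may assume $\gcd(d,p)=1$ and $p\ge3$. Then $\{e^{-im\omega}\}_{m=0}^{p-1}$ is a permutation of the $p$-th roots of unity, so $\sum_{m=0}^{p-1}e^{-im\omega}=0$; consequently $Z$ has centroid $0$ and is centrally symmetric about the origin, while the directions span $\mathbb C$ over $\mathbb R$, so $Z$ is a full-dimensional convex polygon containing an open disc around $0$. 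Multiplying by $e^{-i\omega}$ is an isometry, proving the claim and recovering the symmetric polygon mentioned in the footnote.

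\emph{Main obstacle.} The technical heart is the sub-series version of Theorem \ref{L} in the decomposition step: while the greedy scheme and the critical no-gap inequality are structurally identical to the $p=1$ case, the arithmetic with the Fibonacci sub-sequence $(f_{pn+m})_n$ is most naturally handled by the Fibonacci control system introduced in Section \ref{ssectioncontrol}. The threshold $q(p)$ is engineered precisely so that the worst no-gap inequality becomes an equality there, in direct analogy with how $1+\sqrt3$ plays that role at $p=1$.
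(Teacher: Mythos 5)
Your proposal follows essentially the same route as the paper: restrict to $\mathbf v=\mathbf 1$, split the series by residue classes modulo $p$ (the paper's identity (\ref{split})), fill each scalar sub-series' range by a greedy algorithm whose worst no-gap condition is exactly $S(q,0,p)\ge 2$, i.e.\ $q\le q(p)$ (the content of Lemma \ref{lboundcomplex} and Lemma \ref{lcontrollability}), and assemble the resulting segments by Minkowski sum into a polygon around the origin. Your zonotope/centroid argument for why the origin is interior (reducing $d/p$ to lowest terms, $p\ge 3$, roots of unity summing to zero) is in fact spelled out more carefully than the paper's one-line assertion, but the decomposition and the key lemma are the same.
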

The proof of Theorem \ref{workspace} is postponed to Section \ref{proofworkspace} below.
\section{A Fibonacci control system}\label{ssectioncontrol}
Throughout this section we introduce an auxiliary control system, that we call \emph{Fibonacci control system} and we study its asymptotic reachable set.

We shall see that the reachability properties of the Fibonacci control system are
somehow inherited by manipulator (modeled in previous section as the sequence of junctions $x(\mathbf u,\mathbf v)$) and that this relation provides an indirect proof of Theorem \ref{L} and Theorem \ref{workspace}.

In order to gradually introduce Fibonacci control system, we begin with some remarks on particular configurations of $x(\mathbf u,\mathbf v)$.

We notice that for every $\mathbf u$
$$x(\mathbf u,\mathbf 0)=\sum_{k=0}^\infty u_k \frac{f_k}{q^k}=L(\mathbf u)$$
and
$$x(\mathbf u,\mathbf 1)=\sum_{k=0}^\infty u_k \frac{f_k}{q^ke^{i\omega k}}=\sum_{k=0}^\infty u_k \frac{f_k}{z^k}, \quad \text{where } z=q e^{i\omega}.$$
Then both Theorem \ref{L} and Theorem \ref{workspace} are related to the study of the set
$$R_\infty(z):=\left\{\sum_{k=0}^\infty u_k \frac{f_k}{z^k}\mid u_k\in\{0,1\}\right\}.$$

Indeed
$$L_\infty(q)=\{L(\mathbf u)\mid \mathbf u\in\{0,1\}^\infty\}=R_\infty(q)$$  and
$$W_{\infty,q,\omega}\supseteq \{x(\mathbf u,\mathbf 1)\mid \mathbf u\in\{0,1\}^\infty\}=R_\infty(qe^{i\omega})$$
In particular,  the relation with Theorem \ref{workspace} becomes clear by noticing that
 if we are able to show that $R_\infty(qe^{i\omega})$ is a neighborhood of the origin then the claim of Theorem \ref{workspace} follows.

 \begin{remark}\label{top}
  Notice that if $|z|>\varphi$ then  $R(z)$ is well defined and it is a compact set. Indeed one has
  \begin{align*}
  \lim_{n\to\infty}|\sum_{k=n}^\infty u_k \frac{f_k}{z^k}|\leq \lim_{n\to\infty}\sum_{k=0}^n | \frac{f_k}{z^k}|\leq \lim_{n\to\infty}\sum_{k=n}^\infty  \frac{\varphi^{k-1}}{|z|^k}=0.
\end{align*}
 (for the proof of the estimate $f_{k}\leq \varphi^{k-1}$ see Proposition \ref{Fnphi} below) and, consequently, the convergence of the series  $\sum_{k=0}^\infty u_k \frac{f_k}{z^k}$. Furthermore one has
$$|\sum_{k=0}^\infty u_k \frac{f_k}{z^k}|\leq \varphi^{-1}\left(1+\frac{1}{1-\varphi/|z|}\right)$$
thus $R(z)$ is a bounded set. Finally $R(z)$ is closed
by the continuity of the map
$$\mathbf u\mapsto \sum_{k=0}^\infty u_k \frac{f_k}{z^k}$$
with respect to the topology on infinite sequences induced by the distance $d(\mathbf u,\mathbf v)=2^{-\min\{k \mid u_k\not= v_k\}}$.
\end{remark}

In view of above reasoning, in what follows we shall focus on the study of $R_\infty(z)$, by constructing the theoretical background necessary to prove Theorem \ref{L} and Theorem \ref{workspace}
and by investigating further properties of $R_\infty(z)$.

We finally introduce the Fibonacci control system
\begin{equation}\label{F}\tag{F}
 \begin{cases}
  \bar x_0=u_0\\
  \bar x_1=u_1+\frac{u_0}{z}\\
  \bar x_{n+2}=u_{n+2}+\frac{\bar x_{n+1}}{z}+\frac{\bar x_{n}}{z^2}.
 \end{cases}
\end{equation}
and we denote by $x_n(\mathbf  u)$ the (discrete) trajectory corresponding to the control $\mathbf u\in\{0,1\}$.
We show that $R_\infty(z)$ is the asymptotic reachable set of (F).
\begin{proposition}
Let $z\in\CC$ be such that $|z|>\varphi$, where $\varphi=(1+\sqrt{5})/2$ is the Golden Ratio. Then $x\in R_\infty(z)$ if and only if
$$x=\lim_{n\to \infty} \bar x_n(\mathbf u)$$
 for some $\mathbf u\in \{0,1\}^\infty$.
\end{proposition}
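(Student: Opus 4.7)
The plan is to derive an explicit formula for $\bar x_n(\mathbf u)$ from the recursion (F) and then to identify both the trajectory values and their limits as elements of $R_\infty(z)$ via the compactness of that set (Remark~\ref{top}). First, by induction on $n$, one establishes
$$\bar x_n(\mathbf u)=\sum_{k=0}^n u_{n-k}\,\frac{f_k}{z^k}.$$
The cases $n=0,1$ agree with the initial conditions of (F) upon noting $f_0=f_1=1$, and the inductive step substitutes the formulas for $\bar x_{n+1}$ and $\bar x_n$ into (F), groups the coefficient of $u_{n+2-k}/z^k$ for each $k$, and collapses the two shifted Fibonacci contributions via $f_k=f_{k-1}+f_{k-2}$.

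From this formula the implication ``$\lim\bar x_n(\mathbf u)\in R_\infty(z)$'' is immediate: setting $v^{(n)}_k:=u_{n-k}$ for $k\le n$ and $v^{(n)}_k:=0$ for $k>n$ exhibits $\bar x_n(\mathbf u)=\sum_{k\ge 0}v^{(n)}_k f_k/z^k\in R_\infty(z)$, so the entire trajectory lies in $R_\infty(z)$. Since Remark~\ref{top} shows that $R_\infty(z)$ is closed (indeed compact) whenever $|z|>\varphi$, any limit of the trajectory is again in $R_\infty(z)$.

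For the converse, given $x=\sum_{k\ge 0} w_k f_k/z^k\in R_\infty(z)$, one constructs $\mathbf u$ attaining $x$ as a trajectory limit by a ``reversed encoding on growing horizons'': choose $0=N_0<N_1<\cdots$ with $N_{j+1}-N_j\to\infty$ and set $u_{N_j-k}:=w_k$ for $j\ge 1$ and $0\le k<N_j-N_{j-1}$, completing the remaining coordinates by zeros. The closed form then splits $\bar x_{N_j}(\mathbf u)$ into a prefix $\sum_{k<N_j-N_{j-1}} w_k f_k/z^k$ converging to $x$, plus a remainder from earlier blocks bounded in modulus by $\sum_{k\ge N_j-N_{j-1}}f_k/|z|^k$, which is a geometric tail vanishing since $f_k\le\varphi^{k-1}$ and $|z|>\varphi$. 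The main obstacle is to choose the gaps $N_{j+1}-N_j$ large enough---exploiting the contractive character of (F)---so that this convergence along the horizons upgrades to a full limit $\lim_{n\to\infty}\bar x_n(\mathbf u)=x$, rather than merely a subsequential one.
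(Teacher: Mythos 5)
Your closed formula $\bar x_n(\mathbf u)=\sum_{k=0}^n u_{n-k}f_k/z^k$ and its inductive verification coincide with the paper's argument, and your ``if'' direction (each $\bar x_n(\mathbf u)$ lies in $R_\infty(z)$ after padding with zeros, and $R_\infty(z)$ is closed by Remark \ref{top}) is exactly the role the paper assigns to that remark. The divergence is in the ``only if'' direction, where the paper stops at the closed formula while you attempt an explicit construction --- and the obstacle you flag at the end is a genuine gap that cannot be closed as stated. Your block construction controls $\bar x_n$ only along the horizons $n=N_j$; at an intermediate time such as $n=N_j+1$ the reversed reading yields approximately $u_{N_j+1}+z^{-1}\sum_{k}w_kf_{k+1}z^{-k}$, a shifted sum that in general stays a fixed distance from $x$ no matter how large the gaps $N_{j+1}-N_j$ are. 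Worse, no choice of $\mathbf u$ can help: if $\bar x_n(\mathbf u)\to x$, then passing to the limit in (\ref{F}) gives $u_{n+2}=\bar x_{n+2}-z^{-1}\bar x_{n+1}-z^{-2}\bar x_n\to x\left(1-z^{-1}-z^{-2}\right)$, so the binary sequence $(u_n)$ converges and is therefore eventually constant, which forces $x\in\{0,\sum_{k\geq 0}f_k/z^k\}$. Hence the set of genuine full limits of trajectories of (\ref{F}) is a two-point set, not $R_\infty(z)$.

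What your subsequential argument does prove --- and what the paper's own proof actually establishes, since it consists solely of the closed formula plus the appeal to Remark \ref{top} --- is that $R_\infty(z)$ is the closure of $\bigcup_n\{\bar x_n(\mathbf u)\mid\mathbf u\in\{0,1\}^\infty\}$: every $x=\sum_k w_kf_k/z^k$ is the limit of the finite-horizon states $\bar x_n(\mathbf u^{(n)})$ associated with the reversed truncated controls $\mathbf u^{(n)}=(w_n,\dots,w_1,w_0,0,\dots)$. Under that reading of ``asymptotic reachable set'' the proposition is correct and your argument, with the last step restated as a claim about limits over $n$-dependent controls rather than a single fixed $\mathbf u$, is complete; the final upgrade you hope for, however, is impossible rather than merely difficult.
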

\begin{proof}
In view of Remark \ref{top} and, in particular, of the convergence of the series
$$\sum_{k=0}^{\infty}\frac{f_k}{z^k} u_{k},$$ it suffices to show by induction on $n$ the equality
 $$\bar x_{n+2}(\mathbf u)=\sum_{k=0}^{n+2}\frac{f_k}{z^k} u_{n+2-k}.$$
 The case $n=0$ follows by direct computation. Assume now as inductive hypothesis
$$\bar x_n=\sum_{k=0}^n\frac{f_k}{z^k} u_{n-k}\qquad\text{and}\qquad \bar x_{n+1}=\sum_{k=0}^{n+1}\frac{f_k}{z^k} u_{n+1-k}$$
so that
$$\frac{\bar x_n}{z^2}=\sum_{k=0}^n\frac{f_k}{z^{k+2}} u_{n-k} \quad \text{and}\quad \frac{\bar x_{n+1}}{z}=\frac{f_0}{z}u_{n+1} +\sum_{k=0}^n\frac{f_{k+1}}{z^{k+2}} u_{n-k}$$
and, finally,
\begin{align*}
 \bar x_{n+2}&=u_{n+2}+\frac{\bar x_{n+1}}{z}+\frac{\bar x_{n}}{z^2}=u_{n+2}+\frac{f_0}{q}u_{n+1}+\sum_{k=2}^{n+2}\frac{f_k}{z^k}u_{n+2-k}\\&\stackrel{f_0=f_1}{=}\sum_{k=0}^{n+2}\frac{f_k}{z^k} u_{n+2-k}.
\end{align*}
\end{proof}
\subsection{Asymptotical reachable set  in real case}\label{sreachability}
Throughout this section we consider a real number $q>\varphi$ and we show that $R_\infty(q)=[0,S(q)]$ if and only if $q\leq 1+\sqrt{3}$ (namely we prove Theorem \ref{L}\footnote{Indeed the claim immediately follows by recalling the equality $\{L(\mathbf u)\mid \mathbf u\in\{0,1\}^\infty\}=R_\infty(q)$}) and we give a greedy algorithm steering any reachable point of (F) to the origin.
For brevity, we specialize the definition of $S(q,h,p)$ given in (\ref{defS}) as follows:
\begin{equation}\label{S1}
S(q,h):=\sum_{k=0}^\infty \frac{f_{h+k}}{q^k}=\frac{q^2 f_h+q f_{h-1}}{q^2-q-1}
\end{equation}

Last equality can be proved by a simple inductive argument.  We also shall use the following recursive relation
\begin{equation}\label{S2}
S(q,h)= q (S(q,h-1) - f_{h-1}).
\end{equation}
Finally note that $S(q,0)=S(q)$.
\begin{lemma}\label{lbound}
 Let $q>\varphi$. For every $h$
\begin{equation}\label{estimate}
 f_h\leq \frac{S(q,h+1)}{q}
\end{equation}
if and only if $q\leq 1+\sqrt{3}$.
\end{lemma}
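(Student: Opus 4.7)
The plan is to substitute the closed-form expression for $S(q,h+1)$ from (\ref{S1}) into the inequality (\ref{estimate}), clear denominators (using that $q^2-q-1>0$ since $q>\varphi$), and simplify with the Fibonacci recursion $f_h+f_{h+1}=f_{h+2}$. After these routine manipulations the inequality $f_h\leq S(q,h+1)/q$ should become equivalent to
\begin{equation*}
q^2 f_h-q f_{h+2}-2 f_h\leq 0,
\end{equation*}
and, dividing through by the positive quantity $f_h$, to the parametric quadratic
\begin{equation*}
q^2-q\,\rho_h-2\leq 0,\qquad\text{where}\qquad \rho_h:=\frac{f_{h+2}}{f_h}.\qquad(\star)
\end{equation*}

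Next I would study the Fibonacci ratio $\rho_h$. Writing $\rho_h=1+f_{h+1}/f_h$, a direct check yields $\rho_0=f_2/f_0=2$, and for $h\geq 1$ the Fibonacci recursion gives $f_{h+1}\geq f_h$ (strict for $h\geq 2$, with equality at $h=0$ only because $f_0=f_1=1$), so $\rho_h\geq 2$ with equality precisely at $h=0$. For fixed $q>0$, the left-hand side of $(\star)$ is strictly decreasing in the parameter $\rho_h$, hence $(\star)$ is the tightest exactly at the smallest value of $\rho_h$, namely $h=0$. Consequently $(\star)$ holds for \emph{every} $h$ if and only if it holds at $h=0$.

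Finally, the case $h=0$ of $(\star)$ reads $q^2-2q-2\leq 0$, whose positive root is $1+\sqrt{3}$; since $q>\varphi>0$ this is equivalent to $q\leq 1+\sqrt{3}$. Combining this with the previous paragraph yields the biconditional of the lemma.

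The main obstacle is really just the monotonicity observation that the minimum of $\rho_h$ over $h\in\NN$ is attained at $h=0$; once this is in place, the rest is algebra. A minor point worth verifying carefully is the sign of $q^2-q-1$ when clearing denominators, but this is guaranteed by the standing hypothesis $q>\varphi$.
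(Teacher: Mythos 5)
Your proposal is correct and follows essentially the same route as the paper: both reduce the inequality to the parametric quadratic $q^2 - q\,\tfrac{f_{h+2}}{f_h} - 2 \leq 0$ (the paper phrases it as $q$ being below the positive root $\tfrac{1}{2}(\tfrac{f_{h+1}}{f_h}+1)+\sqrt{\tfrac14(\tfrac{f_{h+1}}{f_h}+1)^2+2}$) and then use the key fact that the Fibonacci ratio $f_{h+1}/f_h$ attains its minimum value $1$ at $h=0$, yielding the threshold $1+\sqrt{3}$. The only nitpick is that $f_{h+1}>f_h$ already for $h\geq 1$ (not just $h\geq 2$), but this does not affect the argument.
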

\begin{proof}
 First of all note that for every $h$
$$\frac{f_{h+1}}{f_h}\geq 1=\frac{f_1}{f_0}$$
and consequently $q\leq 1+\sqrt{3}$ if and only if
$$q\leq \frac{1}{2}\left(\frac{f_{h+1}}{f_h}+1\right) + \sqrt{\frac{1}{4}\left(\frac{f_{h+1}}{f_h}+1\right)^2+2}\quad \text{ for every $h$}.$$
This, together with $q>\varphi>0$ implies that $q\leq 1+\sqrt{3}$ is equivalent to
$$f_h\leq \frac{q f_{h+1}+f_h}{q^2-q-1}\left(=\frac{S(q,h+1)}{q}\right)\quad \text{ for every $h$}.$$
\end{proof}

\begin{theorem}\label{reachability}
 Let $q\leq 1+\sqrt{3}$ and $x\in[0,S(q,0)]$ and consider the sequences $(r_h)$ and $(u_h)$ defined by
\begin{equation}\label{rh}
 \begin{cases}
  r_0=x;\\
  u_h=\begin{cases}
1 &\text{if } r_h\in[f_h,S(q,h)]\\
0 &\text{otherwise }
 \end{cases}\\
r_{h+1}=q(r_h-u_h f_h)
\end{cases}
\end{equation}
Then
\begin{equation}\label{x}
x=\sum_{k=0}^\infty \frac{f_k}{q^k} u_k
\end{equation}
and, consequently, $R_\infty(q)=[0,S(q,0)]$.
Moreover if $q>1+\sqrt{3}$ then $R_\infty \subsetneq [0,S(q,0)]$.
\end{theorem}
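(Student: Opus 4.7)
The plan splits Theorem~\ref{reachability} into two independent claims. First, for $q \in (\varphi, 1+\sqrt{3}]$ the greedy algorithm~\eqref{rh} produces a binary sequence $\mathbf u$ representing every $x \in [0, S(q,0)]$ as $\sum_k u_k f_k/q^k$, yielding $R_\infty(q) = [0, S(q,0)]$. Second, for $q > 1+\sqrt{3}$ an explicit gap in $R_\infty(q)$ witnesses the strict inclusion $R_\infty(q) \subsetneq [0, S(q,0)]$.

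For the first claim, the key step is an induction showing that $r_h \in [0, S(q, h)]$ for every $h$. The base $r_0 = x \in [0, S(q, 0)]$ is given. For the inductive step I split on $u_h$: if $u_h = 1$ then $r_h \in [f_h, S(q, h)]$ by the algorithm, so $r_{h+1} = q(r_h - f_h) \in [0, q(S(q, h) - f_h)] = [0, S(q, h+1)]$ by the recursion~\eqref{S2}; if $u_h = 0$ then the inductive bound $r_h \leq S(q, h)$ combined with the defining rule forces $r_h \in [0, f_h)$, whence $r_{h+1} = q r_h < q f_h$, and here the hypothesis $q \leq 1+\sqrt{3}$ enters precisely through Lemma~\ref{lbound}, which gives $q f_h \leq S(q, h+1)$. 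Once well-posedness is in hand I would unroll the recursion $r_h = u_h f_h + r_{h+1}/q$ into
\begin{equation*}
x = r_0 = \sum_{k=0}^{n-1} \frac{u_k f_k}{q^k} + \frac{r_n}{q^n}, \qquad n \geq 1,
\end{equation*}
and use the estimate $r_n/q^n \leq S(q, n)/q^n = \sum_{k \geq n} f_k/q^k \to 0$ (the series converges since $q > \varphi$) to conclude~\eqref{x}. Combined with the trivial inclusion $R_\infty(q) \subseteq [0, S(q, 0)]$, this gives $R_\infty(q) = [0, S(q, 0)]$.

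For the strict inclusion when $q > 1+\sqrt{3}$, I would isolate the leading digit by setting $A := \{\sum_{k=1}^\infty u_k f_k/q^k : \mathbf u \in \{0,1\}^\infty\}$, so that $R_\infty(q) = A \cup (1 + A)$ with $A \subseteq [0, S(q, 1)/q]$. The direct computation $S(q, 1)/q = (q + 1)/(q^2 - q - 1)$ from~\eqref{S1} shows that $S(q, 1)/q < 1$ precisely when $q^2 - 2q - 2 > 0$, i.e.\ when $q > 1 + \sqrt{3}$. In that regime $A \subseteq [0, S(q, 1)/q]$ and $1 + A \subseteq [1, S(q, 0)]$ are separated by the nonempty open interval $(S(q, 1)/q, 1) \subset [0, S(q, 0)]$, which therefore lies outside $R_\infty(q)$. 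The main obstacle is getting the induction step right, especially the $u_h = 0$ branch, since this is where the sharp threshold $1+\sqrt{3}$ enters via Lemma~\ref{lbound}; the algebraic manipulations in the remaining steps are routine.
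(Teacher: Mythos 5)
Your proposal is correct and follows essentially the same route as the paper's proof: the same invariant $r_h\in[0,S(q,h)]$ established by the same case split (using Lemma~\ref{lbound} in the $u_h=0$ branch and the recursion~\eqref{S2} in the $u_h=1$ branch), the same telescoping identity with remainder $r_n/q^n\to 0$, and the same gap $(S(q,1)/q,\,f_1)$ for the strict inclusion when $q>1+\sqrt{3}$ (the paper phrases this last step as a contradiction on the leading digit $u_0$, which is just your decomposition $R_\infty=A\cup(1+A)$ in different words).
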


\begin{proof}
 Fix $x\in[0,S(q,0)]$ and first of all note that
\begin{equation}\label{remainder}
 x=\sum_{k=0}^h \frac{f_k}{q^k}u_k + \frac{r_{h+1}}{q^{h+1}} \quad \text{ for all } h\geq 0.
\end{equation}
 Indeed  above equality can be shown by induction on $h$. For $h=0$ one has $r_1=q(x-u_0f_0)$ and consequently $x=f_0u_0+r_1/q$. Assume now (\ref{remainder}) as inductive hypothesis. Then
$$r_{h+2}=q^{h+2} \left( x- \sum_{k=0}^h \frac{f_k}{q^k}u_k\right) - q f_{h+1}u_{h+1}$$
and, consequently,
$$x=\sum_{k=0}^{h+1} \frac{f_k}{q^k}u_k + \frac{r_{h+2}}{q^{h+2}}.$$

Now we claim that  if $q\leq 1+\sqrt{3}$ then
\begin{equation}\label{bound}
 r_h\in[0,S(q,h)]\quad \text{for every $h$}.
\end{equation}
We show the above inclusion by induction. If $h=0$ then the claim follows by the definition of $r_0$ and by the fact that $x\in[0,S(q,0)]$. Assume now (\ref{bound}) as inductive hypothesis.
One has $r_h\in[0,S(q,h)]=[0, f_h)\cup[f_h,S(q,h)]$. If $r_h\in [0,f_h)$ then $r_{h+1}=q r_h\in [0,q f_h]\subseteq [0, S(q,h+1)]$ - where the last inclusion follows by Lemma \ref{lbound}.
If otherwise $r_h\in [f_h,S(q,h)]$ then $r_{h+1}=q(r_h-f_h)\subseteq [0, q(S(q,h)-f_h)]=[0, S(q,h+1)]$ - see (\ref{S2}).

Recalling $f_n\sim \varphi^n$ as $n\to \infty$,  one has
\begin{align*}
\sum_{k=0}^\infty \frac{f_k}{q^k}u_k&=\lim_{h\to\infty} \sum_{k=0}^{h-1} \frac{f_k}{q^k}u_k\stackrel{(\ref{remainder})}{=} x-\lim_{h\to\infty}\frac{r_h}{q^h}
\stackrel{(\ref{bound})}{\geq} x-\lim_{h\to\infty} \frac{S(q,h)}{q^h}\\&\stackrel{(\ref{S1})}{=} x-\lim_{h\to\infty} \frac{q^2 f_{h+1}+ q f_h}{q^h(q^2-q-1)}=x.
\end{align*}
On the other hand
$$\sum_{k=0}^\infty \frac{f_k}{q^k}u_k=x-\lim_{h\to\infty} \frac{r_h}{q^h}\leq x$$
and this proves (\ref{x}).
It follows by the arbitrariness of $x$ that if $q\leq 1+\sqrt{3}$ then $R_\infty=[0,S(q,0)]$.

Finally assume $q>1+\sqrt{3}$. By Lemma \ref{lbound} there exists $x\in (S(q,1)/q,f_1)$. In order to find a contradiction, assume $x\in R_\infty$. Then
$$x=u_0f_0+\frac{1}{q}\sum_{k=0}^\infty \frac{f_{k+1}}{q^k}u_{k+1}$$
Note that $u_0\not=1$ because $x<f_1=1$. Then $u_0=0$ and
$$x= \frac{1}{q}\sum_{k=0}^\infty \frac{f_{k+1}}{q^k}u_{k+1}\leq \frac{1}{q}\sum_{k=0}^\infty \frac{f_{k+1}}{q^k}=\frac{S(q,1)}{q}$$
but this contradicts $x\in (S(1,q)/q,f_1)$. Then $x\in [0,S(q,0)]\setminus R_\infty$ and this concludes the proof.
\end{proof}

\subsection{Asymptotical reachable set in complex case}\label{slocal}
Throughtout this section we investigate $R_\infty(z)$ with
 $z = q e^{i \omega}$ and $ \omega=\frac{d}{p}2\pi;~ d,p\in \NN.$
First of all we notice that $z^p=q^p$ and consequently
\begin{equation}\label{split}
 \sum_{k=0}^\infty u_k\frac{f_k}{z^k}=\sum_{h=0}^{p-1} z^{-h}\sum_{k=0}^\infty u_{pk+h}\frac{f_{pk+h}}{q^{pk}}.
\end{equation}
Above equality implies that if $p\geq 2$ and if
$$ R^h_\infty:=\left\{ \sum_{k=0}^\infty \frac{u_{pk+h}f_{pk+h}}{q^{pk}}\mid u_{pk+h}\in \{0,1\}\right\}$$
is an interval (and not a disconnected set) then
$$R_\infty(z)=\left\{\sum_{k=0}^\infty \frac{f_k}{z^k}u_k\mid u_k\in\{0,1\}\right\}=\sum_{j=0}^{p-1} z^{-h} R_\infty^h$$
is a polygon containing the origin in its interior - note that $\min R^h_\infty =0$.
In what follows we show that if $q$ is small enough, then such a local controllability condition is satisfied.

By definition \ref{defS}, so that $R^h_\infty\subset [0,S(q,h,p)]$ for every $h=0,\dots,p-1$ and from simple inductive arguments, we have the following recursive relation
\begin{equation}\label{Shp1}
 S(q,h,p)=f_{h-1} S(q,1,p)+ f_{h-2} S(q,0,p)
\end{equation}
Moreover one has
\begin{equation}\label{S0p}
 S(q,p,p)=q^p(S(q,0,p)-f_0)
\end{equation}
\begin{equation}\label{S1p}
 S(q,p+1,p)=q^p(S(q,1,p)-f_1).
\end{equation}
and, more generally,
\begin{equation}\label{Shp}
 S(q,p+h,p)=q^p(S(q,h,p)-f_h).
\end{equation}
\begin{example}
 Let $q=2$ and $p=4$. In view of (\ref{Shp1}),
\begin{align*}
R^0_\infty&\subseteq[0, S(2,0,4)]\\
R^1_\infty&\subseteq[0, S(2,1,4)]\\
R^2_\infty&\subseteq[0, S(2,0,4)+S(2,1,4)]\\
R^3_\infty&\subseteq[0, S(2,0,4)+2S(2,1,4)].
\end{align*}
See Section \ref{sqp} for the explicit calculation of $S(q,h,p)$. In Theorem \ref{thcontrollability} below, we show that above inclusions are actually equalities, so that
$$R_\infty=R^0_\infty-\frac{i}{2}R^1_\infty-\frac{1}{4}R^2_\infty+\frac{i}{8}R^3_\infty$$
 is a rectangle in the complex plane - see Figure \ref{rectangle}.
\end{example}

\begin{figure}
 \subfloat[ $R_\infty(2e^{i2\pi/3})$.]{\includegraphics[scale=0.5]{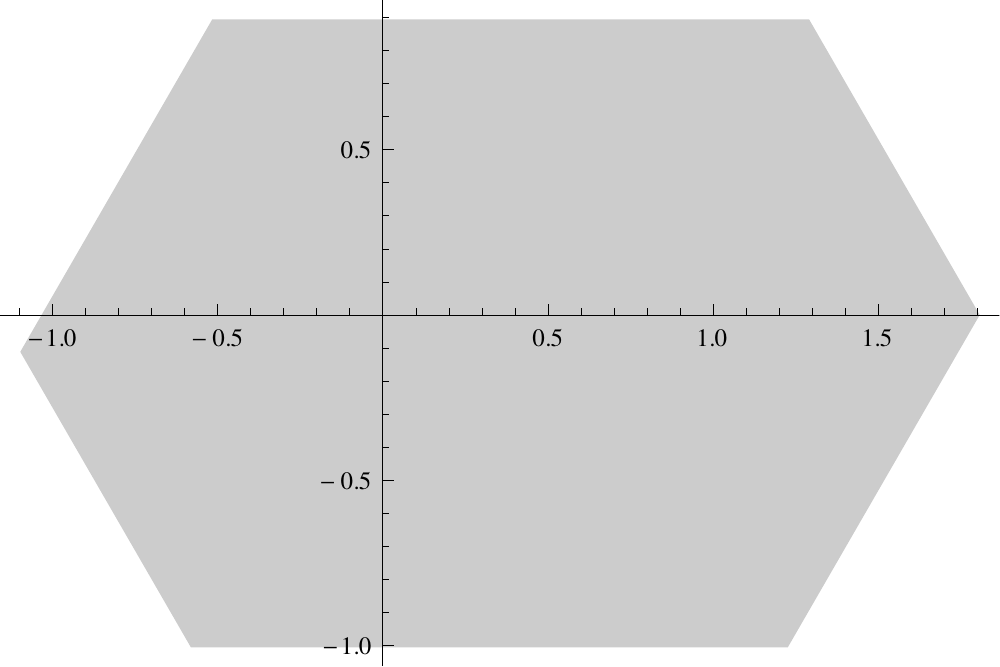}}\hskip1cm
\subfloat[ $R_\infty(2e^{i\pi/2})$.]{\includegraphics[scale=0.5]{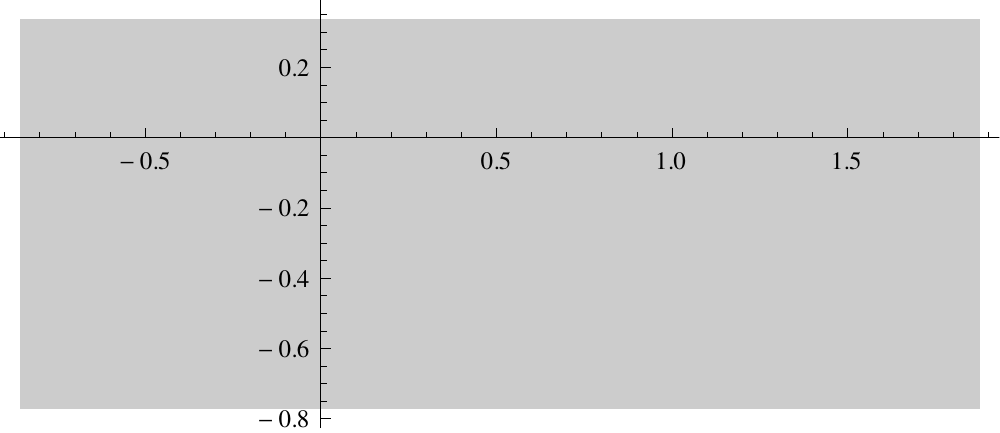}}
\caption{By Theorem \ref{thcontrollability},  $R_\infty(2e^{i2\pi/p})$ with $p=3,4$ is a polygon. \label{rectangle}}
\end{figure}

\begin{lemma}\label{lboundcomplex}
  If $q\leq q(p)$ then for every $h\in \NN$
\begin{equation}\label{estimateh}
 S(q,p,p+h)\geq q^p f_h.
\end{equation}
\end{lemma}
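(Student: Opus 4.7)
The plan is to use the recursive identity (\ref{Shp}) to translate the claim into the simpler-looking inequality $S(q,h,p) \geq 2 f_h$, and then to prove that lower bound by a short induction exploiting the Fibonacci-type recurrence (\ref{Shp1}). I read the left-hand side as $S(q, p+h, p)$, since that is the quantity for which the identities displayed just above the lemma were set up.

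First I would dispatch the base cases $h=0$ and $h=1$. Each term $f_{pk}/q^{pk}$ in $S(q,0,p)$ is strictly decreasing in $q$ on $(\varphi,\infty)$, and $q(p)$ is by definition the greatest real solution of $S(q,0,p)=2$; hence the hypothesis $q\leq q(p)$ forces $S(q,0,p)\geq 2 = 2 f_0$. A termwise comparison using $f_{pk+1}\geq f_{pk}$ then yields
$$S(q,1,p)\geq S(q,0,p)\geq 2 = 2 f_1.$$

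Next, for $h\geq 2$ I would feed the two base inequalities into (\ref{Shp1}):
$$S(q,h,p)= f_{h-1}\,S(q,1,p) + f_{h-2}\,S(q,0,p) \geq 2 f_{h-1} + 2 f_{h-2} = 2 f_h.$$
Inserting this estimate into (\ref{Shp}) gives
$$S(q,p+h,p) = q^{p}\bigl(S(q,h,p) - f_h\bigr) \geq q^{p}(2 f_h - f_h) = q^{p} f_h,$$
which is the desired inequality.

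I do not foresee a genuine obstacle: the proof is essentially a mechanical assembly of the two recursive identities (\ref{Shp1}) and (\ref{Shp}) with the defining property of $q(p)$. The only step meriting a line of care is the monotonicity argument $q\leq q(p)\Rightarrow S(q,0,p)\geq 2$, which relies on the strict monotonicity of each summand $f_{pk}/q^{pk}$ in $q$ combined with the fact that $q(p)$ is the \emph{largest} root of $S(\cdot,0,p)=2$; without the latter qualifier one could not rule out smaller roots at which the inequality would reverse.
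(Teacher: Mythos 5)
Your proposal is correct and follows essentially the same route as the paper's proof: both establish $S(q,h,p)\geq 2f_h$ from the base cases $S(q,0,p)\geq 2$, $S(q,1,p)\geq 2$ via the recurrence (\ref{Shp1}), and then conclude through (\ref{Shp}). You read the statement's $S(q,p,p+h)$ as $S(q,p+h,p)$, which is indeed the intended quantity, and your explicit justification of $q\leq q(p)\Rightarrow S(q,0,p)\geq 2$ by monotonicity is a point the paper leaves implicit.
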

\begin{proof}
The case $h=0$ follows by the definition of $q(p)$ and by (\ref{S0p}). If $h=1$ then
 $$S(q,p,p+1)\geq S(q,p,p)\geq q^p f_0=q^p f_1.$$
Fix now $h\geq 2$ and now (\ref{estimateh}) as inductive hypothesis for every integer lower than $h$. It follows by (\ref{Shp1})
$$S(q,p,p+h)=f_{h-1} S(q,1,p)+f_{h-2} S(q,0,p)\geq 2 (f_{h-1}+f_{h-2})=2 f_h$$
therefore, by (\ref{Shp}), we finally get
 $$S(q,p,p+h)=q^p(S(q,h,p)-f_h)\geq q^p f_h.$$
\end{proof}

\begin{figure}\begin{center}
 \includegraphics{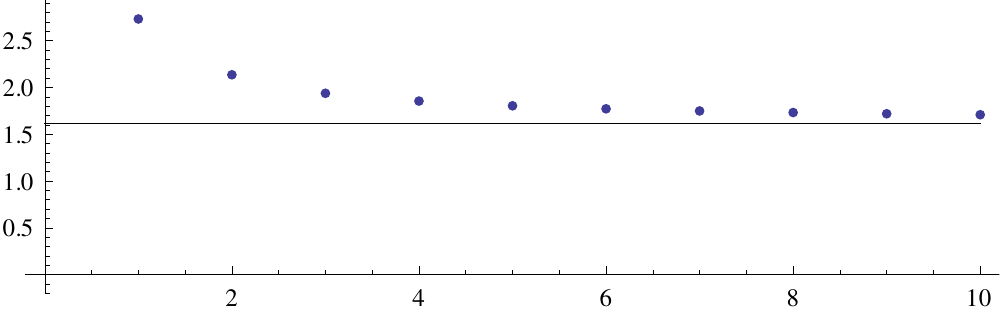}\end{center}
\caption{$q(p)$ for $p=1,\dots,10$. Note that $q(p)$ tends to $\varphi$ as $p\to\infty$. Indeed it suffices to recall $f_p\sim \varphi^p$ to have
$\lim_{p\to\infty} q(p)/\varphi=1$.}
\end{figure}
Finally let us define $q(p)$ as the greatest solution of the equation
$$S(q,0,p)=2f_0=2$$
Note that if $q\leq q(p)$ then $S(q,0,p)\geq 2$.
\begin{remark}
 The value $q(p)$ is explicitly calculated in Section \ref{sqp} below. Among other results, we shall show
\begin{equation}
 q(p)=
 \begin{cases}
 \bigg(\frac 12 (f_{p-2}+2f_p)+\frac 12 \sqrt {(f_{p-2}+2f_p)^2-8 } \bigg)^{\frac 1 p} \,\,&p\, \,\, even;\\
  \bigg(\frac 12 (f_{p-2}+2f_p)+\frac 12 \sqrt {(f_{p-2}+2f_p)^2+8 }\bigg)^{\frac 1 p}&p\,\,\,odd.
 \end{cases}
 \end{equation}
We notice that above equality implies $q(p)\sim f(p)^{1/p}\sim \varphi$ as $p\to\infty$.
\end{remark}
\begin{example}
 $q(1)=1+\sqrt{3}$, $q(2)=\sqrt{\frac{1}{2} \left(5+\sqrt{17}\right)}$, $q(3)=\sqrt[3]{\frac{1}{2} \left(7+\sqrt{57}\right)}$, $q(4)=\sqrt[4]{6+\sqrt{34}} $.
\end{example}

\begin{lemma}\label{lcontrollability}
 Let $p,h\in \NN$ and let $q\leq q(p)$. For $x\in[0,S(q,h,p)]$ consider the sequences $(r_n)$ and $(u_n)$ defined by
\begin{equation}\label{rhcomplex}
 \begin{cases}
  r_0=x;\\
  u_n=\begin{cases}
1 &\text{if } r_n\in[f_n,S(q,np+h)]\\
0 &\text{otherwise }
 \end{cases}\\
r_{n+1}=q^p(r_n-u_n f_{np+h}).
\end{cases}
\end{equation}
Then
\begin{equation}\label{xcomplex}
x=\sum_{k=0}^\infty \frac{f_{pk+h}}{q^{pk}} u_k
\end{equation}
and, consequently, $R^h_\infty=[0,S(q,0,p)]$.
Moreover if $q>q(p)$ then $R_\infty \subsetneq [0,S(q,0,p)]$.
\end{lemma}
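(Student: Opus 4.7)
The plan is to mirror the proof of Theorem \ref{reachability}, replacing the one-step greedy algorithm by a $p$-step greedy algorithm that extracts the arithmetic-progression subseries $\sum_k u_k f_{pk+h}/q^{pk}$. The two main ingredients are a telescoping identity for the remainder and an invariant bound on $r_n$ maintained by the greedy choice; in place of Lemma \ref{lbound} we use Lemma \ref{lboundcomplex}, and in place of the closed forms $(\ref{S1})$--$(\ref{S2})$ we use the recursions $(\ref{Shp1})$--$(\ref{Shp})$.

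First I would prove by induction on $n$ the remainder identity
$$x = \sum_{k=0}^{n} \frac{f_{pk+h}}{q^{pk}}\, u_k + \frac{r_{n+1}}{q^{p(n+1)}},$$
which follows from $r_0=x$ and from rearranging $r_{n+1}=q^p(r_n-u_n f_{pn+h})$ as $r_n/q^{pn} = u_n f_{pn+h}/q^{pn} + r_{n+1}/q^{p(n+1)}$. Next I would establish by induction the invariant $r_n\in[0, S(q,pn+h,p)]$. The base case is the hypothesis $x\in[0,S(q,h,p)]$. For the inductive step, split on the value of $u_n$: if $u_n=0$ (so $r_n<f_{pn+h}$) then $r_{n+1}=q^p r_n\leq q^p f_{pn+h}\leq S(q,p(n+1)+h,p)$, where the last inequality is Lemma \ref{lboundcomplex} applied with offset $pn+h$; if $u_n=1$ (so $r_n\geq f_{pn+h}$) then $(\ref{Shp})$ gives $r_{n+1}=q^p(r_n-f_{pn+h})\leq q^p(S(q,pn+h,p)-f_{pn+h})=S(q,p(n+1)+h,p)$, and nonnegativity is immediate.

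With the invariant in hand, I would pass to the limit $n\to\infty$. Since $q>\varphi$ is implicit (as $q(p)>\varphi$) and $(\ref{Shp1})$ together with $f_m\sim \varphi^m$ yields $S(q,pn+h,p)\sim C\,\varphi^{pn+h}$, we get $r_{n+1}/q^{p(n+1)}\to 0$, so the remainder identity yields $(\ref{xcomplex})$. By the arbitrariness of $x\in[0,S(q,h,p)]$, this gives the inclusion $R_\infty^h\supseteq [0,S(q,h,p)]$, hence equality with the upper bound already noted after $(\ref{Shp1})$.

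Finally, for the converse, when $q>q(p)$ one has $S(q,0,p)<2$ since $q\mapsto S(q,0,p)$ is strictly decreasing and equals $2$ at $q(p)$; combining with $(\ref{S0p})$ gives $S(q,p,p)/q^p = S(q,0,p)-1<1=f_0$. Specializing to $h=0$, any $x\in(S(q,p,p)/q^p,\,1)$ cannot lie in $R^0_\infty$: for such $x$ an expansion $x=u_0+\sum_{k\geq 1}u_k f_{pk}/q^{pk}$ would force $u_0=0$ (since $x<1$), whence $x\leq S(q,p,p)/q^p$, contradicting the choice of $x$. The main obstacle is keeping the double indexing of $S(q,\cdot,\cdot)$ consistent across the invariant, the recursion $(\ref{Shp})$, and Lemma \ref{lboundcomplex}; once the chain of estimates is aligned, every step is a mechanical consequence of the identities $(\ref{S0p})$--$(\ref{Shp})$ and $(\ref{Shp1})$ already derived.
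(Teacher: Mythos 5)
Your proposal is correct and follows essentially the same route as the paper: the same remainder identity, the same invariant $r_n\in[0,S(q,pn+h,p)]$ proved by the same case split via Lemma \ref{lboundcomplex} and (\ref{Shp}), and the same limiting argument. Your treatment of the converse is in fact slightly cleaner than the paper's (which at that point cites Lemma \ref{lbound}, the $p=1$ statement, rather than deriving the gap interval directly from the definition of $q(p)$ and the monotonicity of $S(q,0,p)$ as you do), but the underlying contradiction argument is identical.
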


\begin{proof}
 Fix $h\in\NN$ and $x\in[0,S(q,0,p)]$. First of all note that
\begin{equation}\label{remaindercomplex}
 x=\sum_{k=0}^n \frac{f_{pk+h}}{q^{pk}}u_k + \frac{r_{n+1}}{q^{p(n+1}} \quad \text{ for all } n.
\end{equation}
Indeed for $h=0$ one has $r_1=q^p(x-u_0f_h)$ and consequently $x=f_hu_0+r_1/q^p$. Assume now (\ref{remaindercomplex}) as inductive hypothesis. Then
\begin{align*}
r_{n+2}&=q^p(r_{n+1} - u_{n+1}f_{p(n+1)+h})\\&=q^{p(n+2)}\left( x- \sum_{k=0}^n \frac{f_{kp+h}}{q^{pk}}u_k\right) - q^{p(n+2)} f_{p(n+1)+h}u_{n+1}
\end{align*}
and, consequently,
$$x=\sum_{k=0}^{n+1} \frac{f_{kp+h}}{q^{kp}}u_k + \frac{r_{h+2}}{q^{p(n+2)}}.$$

Now, we claim that for every $n$ if $q\leq q(p)$ then
\begin{equation}\label{boundcomplex}
 r_n\in[0,S(q,pn+h,p)].
\end{equation}
We show the above inclusion by induction. If $h=0$ then the claim follows by the definition of $r_0$ and by the fact that $x\in[0,S(q,h,p)]$. Assume now (\ref{boundcomplex}) as inductive hypothesis.
One has $r_n\in[0,S(q,pn+h,p)]=[0, f_{pn+h})\cup[f_{pn+h},S(q,pn+h,p)]$. If $r_n\in [0,f_{pn+h})$ then $r_{n+1}=q^p r_n\in [0,q^p f_{pn+h}]\subseteq [0, S(q,(n+1)p+h,p)]$ - where the last inclusion follows by Lemma \ref{lboundcomplex}.
If otherwise $r_n\in [f_{pn+h},S(q,pn+h,p)]$ then $r_{n+1}=q^p(r_n-f_{np+h})\subseteq [0, q(S(q,pn+h,p)-f_{pn+h})]=[0, S(q,p(n+1)+h,p)]$ - see (\ref{Shp}).

Recalling $f_n\sim \varphi^n$ as $n\to \infty$,  one has
\begin{align*}
\sum_{k=0}^\infty \frac{f_{pk+h}}{q^{pk}}u_k&=\lim_{n\to\infty} \sum_{k=0}^{n-1} \frac{f_{pk+h}}{q^{pk}}u_k\stackrel{(\ref{remaindercomplex})}{=} x-\lim_{n\to\infty} \frac{r_n}{q^{pn}}
\stackrel{(\ref{boundcomplex})}{\geq} x-\lim_{n\to\infty}\frac{S(q,pn+h,p)}{q^{pn}}\\&\stackrel{(\ref{Shp1})}{=} x-\lim_{n\to\infty} \frac{f_{pn+h-1} S(q,1,p)+f_{pn+h-2}S(q,0,p)}{q^{ph}}=x.
\end{align*}
On the other hand
$$\sum_{k=0}^\infty \frac{f_{pk+h}}{q^{pk}}u_k=x-\lim_{n\to\infty} \frac{r_n}{q^{pn}}\leq x$$
and this proves (\ref{xcomplex}).
It follows by the arbitrariness of $x$ that if $q\leq q(p)$ then $R^h_\infty=[0,S(q,0,p)]$.
Finally assume $q>q(p)$. By Lemma \ref{lbound} there exists $x\in (S(q,h,p)/q^p,f_h)$. In order to find a contradiction, assume $x\in R^h_\infty$. Then
$$x=u_0f_h+\frac{1}{q^p}\sum_{k=0}^\infty \frac{f_{p(k+1)+h}}{q^{pk}}u_{k+1}$$
Note that $u_0\not=1$ because $x<f_h$. Then $u_0=0$ and
$$x= \frac{1}{q^p}\sum_{k=0}^\infty \frac{f_{p(k+1)+h}}{q^{pk}}u_{k+1}\leq \frac{1}{q^p}\sum_{k=0}^\infty \frac{f_{p(k+1)}}{q^{pk}}=\frac{S(q,h,p)}{q^p}$$
but this contradicts $x\in (S(q,h,p)/q^p,f_h)$. Then $x\in [0,S(q,0,p)]\setminus R^h_\infty$ and this concludes the proof.
\end{proof}
\begin{figure}\begin{center}
 \subfloat[$h=0$]{\includegraphics[scale=0.1]{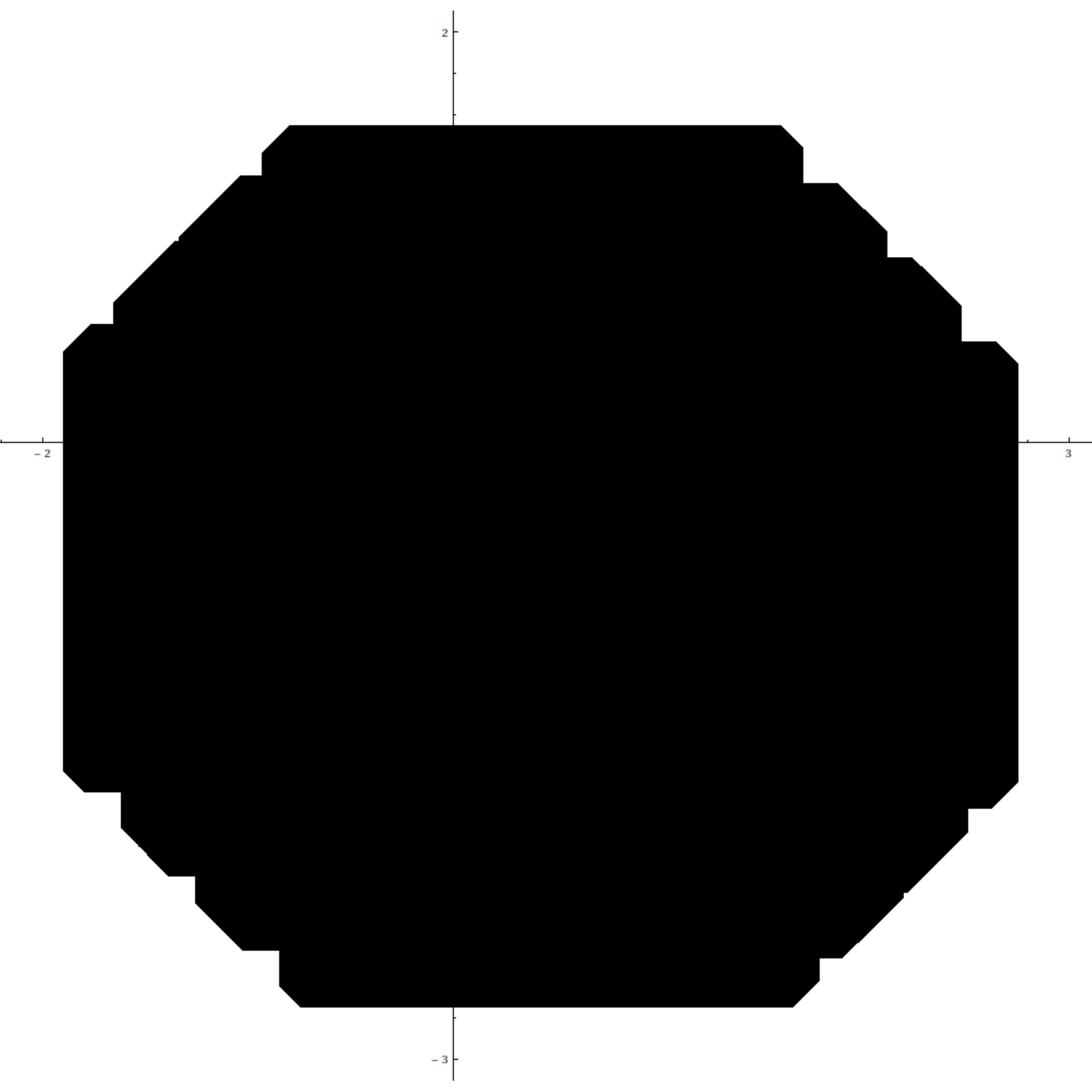}}\hskip0.1cm
\subfloat[$h=0.4$]{\includegraphics[scale=0.1]{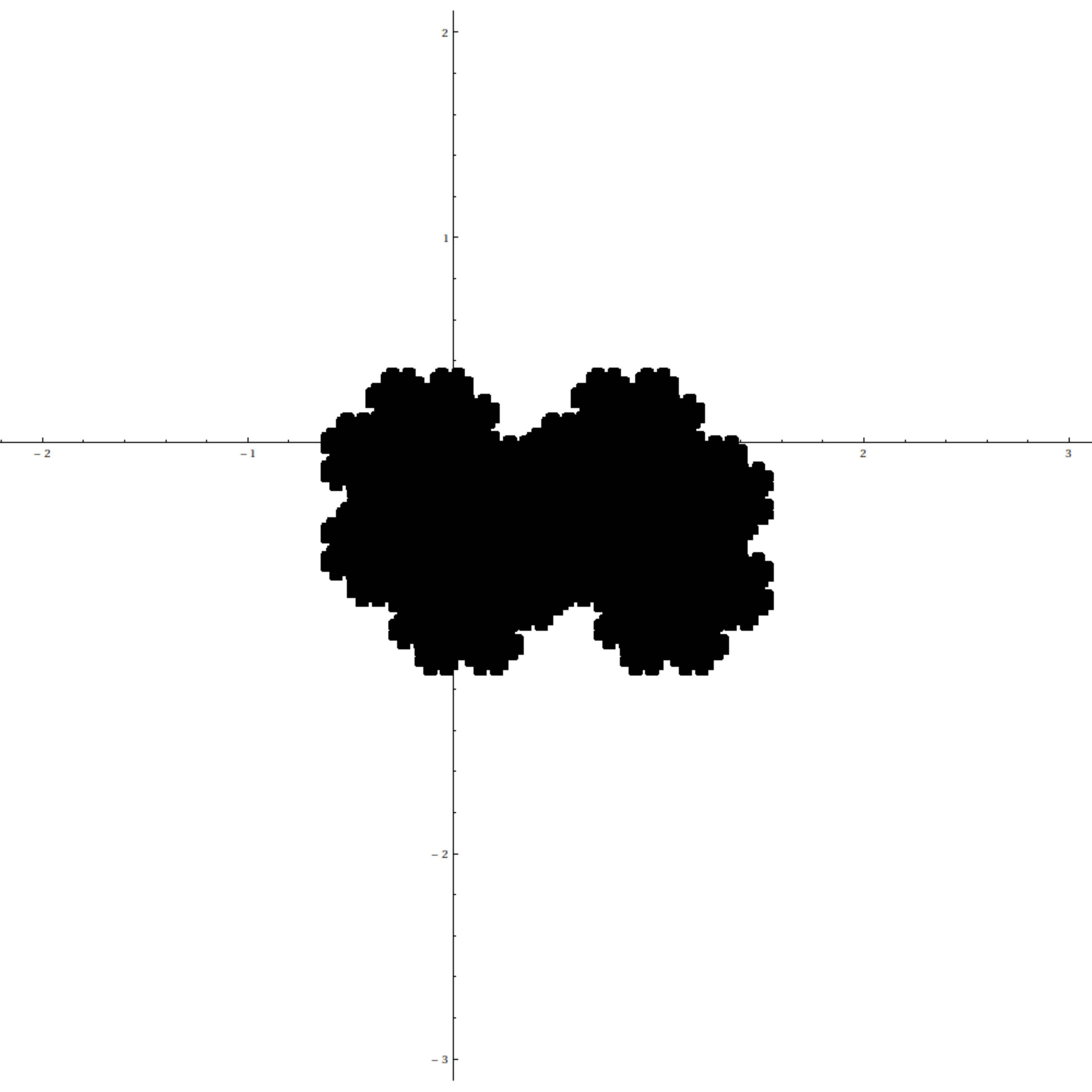}}\hskip0.1cm
\subfloat[$h=0.5$]{\includegraphics[scale=0.1]{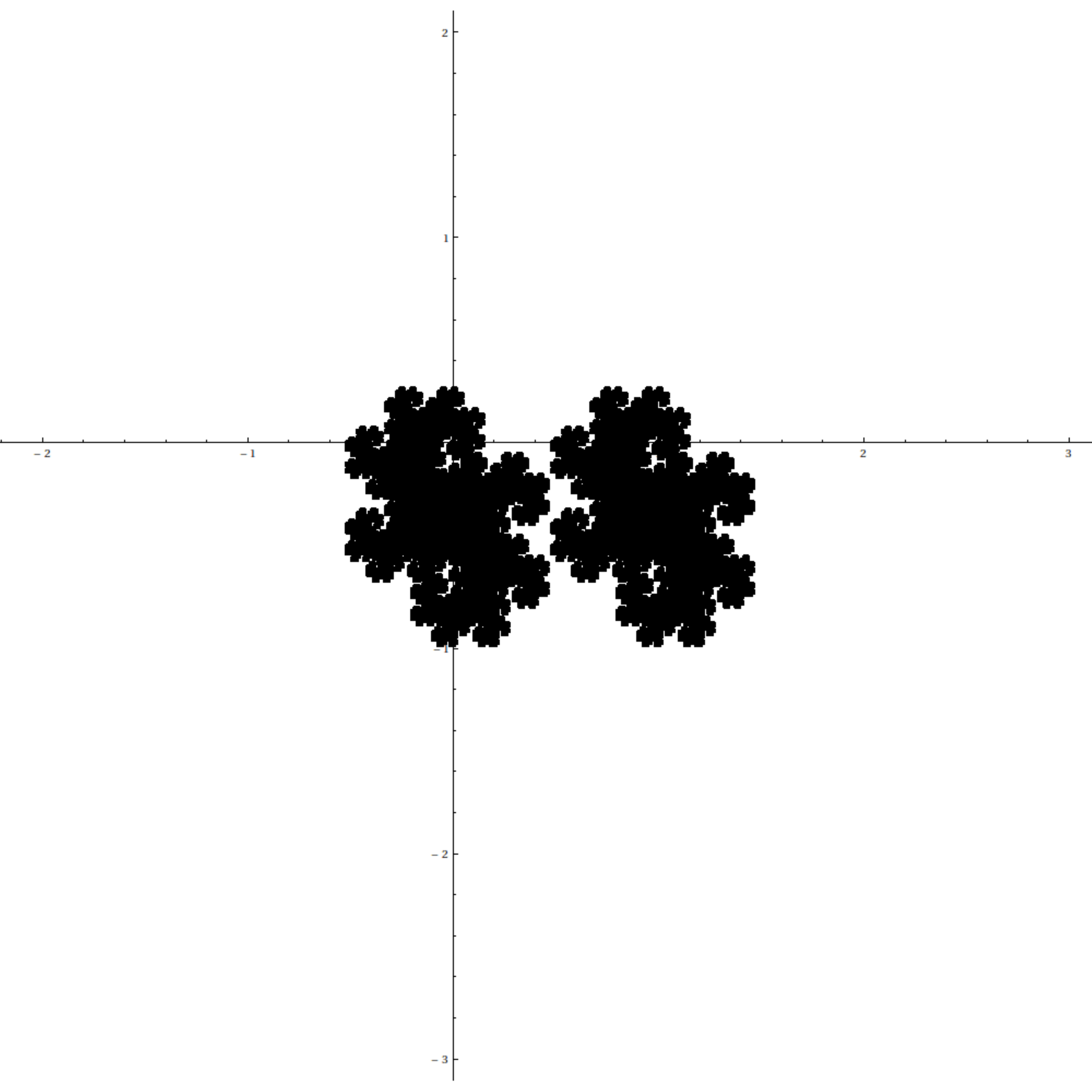}}\end{center}
\caption{Approximations of $R_\infty(z)$ with $z=(q(p)+h)e^{i\pi/4}$ and $h=0,0.4,0.5$. Note that, by Theorem \ref{thcontrollability}, if $h=0$ then $R_\infty(z)$ is indeed an octagon.
See Section \ref{sifs} and, in particular, Remark \ref{rcomplexifs} for a description of the approximation techniques.}
\end{figure}

\begin{theorem}\label{thcontrollability}
 If $\varphi<|z|\leq q(p)$ then $R_\infty(z)$ is a polygon on the complex plane containing the origin.
\end{theorem}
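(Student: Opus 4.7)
The plan is to combine the splitting identity~(\ref{split}) with Lemma~\ref{lcontrollability} to represent $R_\infty(z)$ as a Minkowski sum of line segments based at the origin, and then to recognize this sum as a zonogon. Writing $q := |z|$, the hypothesis $\omega = 2\pi d/p$ gives $z^p = q^p \in \RR$, so every element of $R_\infty(z)$ decomposes as
$$\sum_{k=0}^\infty u_k \frac{f_k}{z^k} = \sum_{h=0}^{p-1} z^{-h} \sum_{k=0}^\infty u_{pk+h} \frac{f_{pk+h}}{q^{pk}}.$$
Since the binary coefficients $u_{pk+h}$ may be chosen independently across the $p$ residue classes modulo $p$, this yields the Minkowski-sum identity
$$R_\infty(z) = \sum_{h=0}^{p-1} z^{-h} R_\infty^h.$$

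Next, I invoke Lemma~\ref{lcontrollability}: the hypothesis $q = |z| \leq q(p)$ is precisely what that lemma needs in order to conclude that each one-dimensional asymptotic reachable set $R_\infty^h$ coincides with the real interval $[0, S(q,h,p)]$. Substituting into the decomposition gives
$$R_\infty(z) = \sum_{h=0}^{p-1} [0,\, z^{-h} S(q,h,p)],$$
a finite Minkowski sum of $p$ line segments all emanating from $0$ in the complex plane. Such a sum is a zonogon, i.e.\ a centrally symmetric convex polygon, so in particular a polygon, and it trivially contains the origin because each summand does.

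The only delicate point is to ensure that $R_\infty(z)$ is a genuine two-dimensional polygon and, more importantly for the subsequent local controllability statement Theorem~\ref{workspace}, to place the origin in its \emph{interior}. This reduces to a positive-spanning condition on the $p$ directions $\{z^{-h}\}_{h=0}^{p-1}$: since their arguments are the equally spaced angles $-h\omega \bmod 2\pi$ for $h = 0,\dots,p-1$, they sweep out a full angular range strictly greater than $\pi$ as soon as $p/\gcd(d,p) \geq 3$, and this positive-spanning property forces the Minkowski sum to contain a neighborhood of $0$. I expect this positive-spanning verification, together with a clean treatment of the degenerate low-$p$ cases, to be the main geometric nuance of the argument; the algebraic backbone is already supplied by the splitting~(\ref{split}) and by Lemma~\ref{lcontrollability}.
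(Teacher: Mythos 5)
Your proposal follows essentially the same route as the paper: the splitting identity (\ref{split}) reduces $R_\infty(z)$ to the Minkowski sum $\sum_{h=0}^{p-1} z^{-h}R_\infty^h$, Lemma \ref{lcontrollability} identifies each $R_\infty^h$ with an interval emanating from $0$, and the sum of segments is then a polygon containing the origin. You are in fact more careful than the paper's two-line proof, which omits the positive-spanning discussion needed to place the origin in the \emph{interior} and to rule out degenerate one-dimensional cases such as $p/\gcd(d,p)\leq 2$.
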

\begin{proof}
 It follows by Lemma \ref{lcontrollability} and by
$$R_\infty(z)=\left\{\sum_{k=0}^\infty \frac{f_k}{z^k}u_k\mid u_k\in\{0,1\}\right\}=\sum_{h=0}^{p-1} z^{-h} R_\infty^h.$$
\end{proof}
\subsubsection{Proof of Theorem \ref{workspace}}\label{proofworkspace}
Theorem \ref{workspace} immediately follows by
$$R_\infty(q^{i\omega})=\{x(\mathbf u,\mathbf 1)\mid \mathbf u\in\{0,1\}^\infty\}\subset W_{\infty,q,\omega}$$
and by Theorem \ref{thcontrollability}.

\subsubsection{Explicit calculus of $q(p)$}\label{sqp}
By a comparison between (\ref{Shp1}),(\ref{S0p}) and (\ref{S1p}), $S(q,0,p)$ and $S(q,1,p)$ are solution of the following system of equations
\begin{equation}
 \begin{cases}
  q^p(S(q,0,p)-f_0)=f_{p-1} S(q,1,p)+f_{p-2} S(q,0,p)\\
  q^p(S(q,1,p)-f_1)=f_{p} S(q,1,p)+f_{p-1} S(q,0,p)
 \end{cases}
\end{equation}
\begin{equation}
 \begin{cases}
  (q^p-f_{p-2} )S(q,0,p)-f_{p-1}S(q,1,p)= f_{0} q^p\\
  -f_{p-1} \,\,S(q,0,p)+(q^p- f_{p})S(q,1,p)= f_{1} q^p
 \end{cases}
\end{equation}
\bigskip
whose solution is
 \begin{equation}\label{S0exp}
S(q,0,p)=\frac {  \left \vert  \begin{array}{ccccc}     f_{0} q^p & -f_{p-1}\\
f_{1} q^p&q^p- f_{p}   \end{array}\right \vert}{ \left \vert  \begin{array}{ccccc}
q^p-f_{p-2} & -f_{p-1} \\
    -f_{p-1}& q^p- f_{p} \\
  \end{array}\right \vert},
 \end{equation}
 \begin{equation}\label{S1exp}
S(q,1,p)=\frac {  \left \vert  \begin{array}{ccccc}     q^p-f_{p-2}& f_{0} q^p\\
  -f_{p-1}&f_{1} q^p   \end{array}\right \vert}{ \left \vert  \begin{array}{ccccc}
q^p-f_{p-2} & -f_{p-1} \\
    -f_{p-1}& q^p- f_{p} \\
  \end{array}\right \vert}.
 \end{equation}

\noindent
We now show that the solutions in (\ref{S0exp}) and (\ref{S1exp}) are well defined.
\begin{proposition}\label{pdelta}
Let
$$
 {\Delta_p}(q):=\left \vert  \begin{array}{ccccc}
q^p-f_{p-2} & -f_{p-1} \\
    -f_{p-1}& q^p- f_{p} \\
  \end{array}\right \vert= (q^p-f_{p-2} )(q^p- f_{p})-f_{p-1}^2
$$
Then
\begin{equation}\label{delta}
\Delta_p(q)= q^{2p}-(f_{p-2} +f_{p})q^p+(-1)^{p}
 \end{equation}
 and the real roots of $\Delta_p(q)$ are $\pm \varphi$ and $\pm (\varphi-1)$ if $p$ is even and $-\varphi$ and $\varphi-1$ if $p$ is odd.\\
 In particular if $q>\varphi$ then $\Delta_p\not=0$.
\end{proposition}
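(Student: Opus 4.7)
The plan is to reduce the analysis of $\Delta_p(q)$ to a degree-two polynomial in $y = q^p$, factor it using Binet's formula, and then read off the real roots case by case.

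First I would expand the determinant directly:
\begin{equation*}
\Delta_p(q)=(q^p-f_{p-2})(q^p-f_p)-f_{p-1}^2 = q^{2p}-(f_{p-2}+f_p)q^p + \bigl(f_{p-2}f_p-f_{p-1}^2\bigr).
\end{equation*}
To obtain (\ref{delta}) it remains to show $f_{p-2}f_p - f_{p-1}^2 = (-1)^p$. Since the paper's sequence satisfies $f_0=f_1=1$ and the Fibonacci recurrence, it coincides with the shifted classical Fibonacci $f_n = F_{n+1}$, so this identity is Cassini's identity $F_{n-1}F_{n+1}-F_n^2=(-1)^n$ specialized to $n=p$. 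A short induction on $p$ using $f_{p-1}=f_p-f_{p-2}$ and $f_p^2 - f_{p-1}f_{p+1} = -(f_{p-1}^2 - f_{p-2}f_p)$ gives the sign flip at each step.

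Next I would recognize the coefficient $f_{p-2}+f_p$ as the Lucas number $L_p$. Using Binet's formula for $f_n = F_{n+1} = (\varphi^{n+1}-\psi^{n+1})/\sqrt{5}$ with $\psi := (1-\sqrt{5})/2 = -(\varphi-1)$, one computes
\begin{equation*}
f_{p-2}+f_p = \frac{\varphi^{p-1}(1+\varphi^{2})-\psi^{p-1}(1+\psi^{2})}{\sqrt{5}} = \varphi^p+\psi^p,
\end{equation*}
where the last equality uses $1+\varphi^2=\varphi\sqrt{5}+\text{(symmetric term)}$, or more simply the well-known identity $L_p=F_{p-1}+F_{p+1}$. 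Combined with $\varphi\psi=-1$, hence $(\varphi\psi)^p=(-1)^p$, the polynomial $y^2-(\varphi^p+\psi^p)y+(\varphi\psi)^p$ in $y=q^p$ factors as $(y-\varphi^p)(y-\psi^p)$, so
\begin{equation*}
\Delta_p(q)=(q^p-\varphi^p)(q^p-\psi^p).
\end{equation*}

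From this factorization the real roots follow by inspection. The equation $q^p=\varphi^p$ has the real solutions $\{\pm\varphi\}$ when $p$ is even and $\{\varphi\}$ when $p$ is odd; similarly $q^p=\psi^p$ yields $\{\pm(\varphi-1)\}$ when $p$ is even (since then $\psi^p=(\varphi-1)^p>0$) and the single real root $\psi=-(\varphi-1)$ when $p$ is odd. This gives the claimed list of real roots. Finally, for $q>\varphi$ we have $q^p>\varphi^p>0$; and since $|\psi|=\varphi-1<\varphi<q$, we also have $q^p>|\psi|^p\geq \psi^p$ (the last inequality trivial when $\psi^p<0$). Thus both factors are strictly positive, so $\Delta_p(q)>0$, in particular nonzero.

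The only genuinely nonroutine step is the identification of $f_{p-2}+f_p$ with $L_p=\varphi^p+\psi^p$, which together with $\varphi\psi=-1$ makes the factorization transparent; everything else is bookkeeping on signs, and the main place to be careful is distinguishing the parity of $p$ when extracting real $p$-th roots of $\psi^p$.
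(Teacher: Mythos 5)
Your proof is correct and reaches the intended conclusion, but by a genuinely different and arguably cleaner route than the paper's. The paper likewise uses Cassini's identity to identify the constant term as $(-1)^p$, but it then treats $\Delta_p$ as a quadratic in $z=q^p$, writes the solutions via the quadratic formula, and verifies separately that the candidate values are roots by substituting and using $\varphi^p=f_{p-1}\varphi+f_{p-2}$ together with Cassini again (plus the symmetry $\Delta_p(q)=\Delta_p(-q)$ for even $p$ and $1-\varphi=-1/\varphi$). Your factorization $\Delta_p(q)=(q^p-\varphi^p)(q^p-\psi^p)$, obtained from $f_{p-2}+f_p=L_p=\varphi^p+\psi^p$ and $(\varphi\psi)^p=(-1)^p$, subsumes all of that in one line, and it buys you something the paper's argument does not give: strict positivity $\Delta_p(q)>0$ for $q>\varphi$, rather than mere nonvanishing.

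One point you should not gloss over: for odd $p$ your (correct) computation yields the real roots $\varphi$ and $\psi=1-\varphi=-(\varphi-1)$, which is also what the paper's own proof verifies (it shows $\Delta_p(\varphi)=0$ and $\Delta_p(1-\varphi)=0$), but the statement of the proposition lists them as $-\varphi$ and $\varphi-1$, i.e.\ the negatives. A sanity check at $p=1$, where $\Delta_1(q)=q^2-q-1$ has roots $\varphi$ and $1-\varphi$, confirms that the statement has the signs flipped in the odd case. You write that your list ``gives the claimed list of real roots'' when it in fact contradicts the claimed list; you should flag the typo rather than assert agreement. The final assertion, that $\Delta_p(q)\neq 0$ for $q>\varphi$, is unaffected either way.
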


\begin{proof}
The equality in (\ref{delta}) follows by Cassini identity for $p\geq 2$
$$f_{p-2}f_{p}-f_{p-1}^2=(-1)^{p}.$$
Now, we notice that $\Delta_p(q)=0$ if and only if
$$\begin{cases}
   z=q^p\\
   z^2-(f_{p-2} +f_{p})z+(-1)^p=0.
  \end{cases}$$
We first discuss the case of an even $p$. When $p$ is even then $\Delta_p(q)$ has exactly $4$ real solutions
\begin{align*}
 q^{even}_{1,2}=\pm \sqrt[p]{\frac 12 (f_{p-2} +f_{p})-\frac 12 \sqrt {(f_{p-2} +f_{p})^2+4 }},\\
 q^{even}_{3,4}=\pm \sqrt[p]{\frac 12 (f_{p-2} +f_{p})+\frac 12 \sqrt {(f_{p-2} +f_{p})^2+4 }}.
\end{align*}

Now, for every $p\in\NN$ one has that the Golden Mean $\varphi$ satisfies
$$\varphi^p=f_{p-1}\varphi+f_{p-2}$$
and, consequently,
\begin{align*}
\varphi^{2p}&=(f_{p-1}\varphi+f_{p-2})^2\\
&=f^2_{p-1}\varphi^2+2f_{p-1}f_{p-2}\varphi+f^2_{p-2}\\
&=(f^2_{p-1}+2f_{p-1}f_{p-2})\varphi+f_{p-1}^2+f_{p-2}^2.
\end{align*}
This, together with $\Delta(q)=\Delta(-q)$ and Cassini identity, implies
$$\Delta_p(\varphi)=\Delta_p(-\varphi)=f_{p-1}(f_{p-1}+f_{p-2}-f_p)\varphi+f_{p-1}^2-f_pf_{p-2}+1=0.$$
Moreover, since $\varphi-1=1/\varphi$ and $\Delta(q)=\Delta(-q)$,
$$\Delta_p(\varphi-1)=\Delta_p(1-\varphi)=\Delta_p(1/\varphi)=\frac{\Delta_p(\varphi)}{\varphi^{2p}}=0.$$
This concludes the proof for the even case. \\
Now, if $p$ is odd then $\Delta_p(q)=0$ has exactly $2$ real solutions
$$
 q^{odd}_{1,2}=\sqrt[p]{\frac 12 (f_{p-2} +f_{p})-\frac 12 \sqrt {(f_{p-2} +f_{p})^2-4 }}.
$$
Again by Cassini identity
\begin{align*}
\Delta_p(\varphi)&=\varphi^{2p}-(f_{p-2}+f_p)\varphi+1\\
&=f_{p-1}(f_{p-1}+f_{p-2}-f_p)\varphi+f_{p-1}^2-f_pf_{p-2}-(-1)^p=0.
 \end{align*}
Since $1-\varphi=-1/\varphi$ we finally obtain
$$\Delta_p(1-\varphi)=\Delta_p(-1/\varphi)=-\frac{\Delta_p(\varphi)}{\varphi^{2p}}=0.$$
\end{proof}
By Proposition \ref{pdelta}
  \begin{example}
 For $p=1$ we already showed
$$S(q,0,1)=S(q)=\frac{q^2}{q^2-q-1} \quad S(q,1,1)=S(q)=\frac{q^2+q}{q^2-q-1}$$
For $p=2$, namely when $z=-q$,
$$S(q,0,2)=\frac{q^2(q^2-1)}{q^4-3q^2+1} \quad S(q,1,2)=\frac{q^4}{q^4-3q^2+1}$$
For $p=3$, namely when $z$ is a rescaled cubic root of unity,
$$S(q,0,3)=\frac{q^3 \left(q^3-1\right)}{q^6-4 q^3-1} \quad S(q,1,3)=\frac{q^6+q^3}{q^6-4 q^3-1}$$
For $p=4$
$$S(q,0,4)=\frac{q^4(q^4-2)}{q^8-7q^4+1} \quad S(q,1,4)=\frac{q^8+q^4}{q^8-7q^4+1}$$
\end{example}
We now give a closed formula for $q(p)$.
\begin{proposition}\label{proppq}
For every $p\in\NN$
\begin{equation}\label{q(p)}
 q(p)=
 \begin{cases}
 \bigg(\frac 12 (f_{p-2}+2f_p)+\frac 12 \sqrt {(f_{p-2}+2f_p)^2-8 } \bigg)^{\frac 1 p} \,\,&p\, \,\, even;\\
  \bigg(\frac 12 (f_{p-2}+2f_p)+\frac 12 \sqrt {(f_{p-2}+2f_p)^2+8 }\bigg)^{\frac 1 p}&p\,\,\,odd.
 \end{cases}
 \end{equation}
 \end{proposition}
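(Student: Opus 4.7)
The strategy is to reduce the defining equation $S(q,0,p)=2$ to a polynomial equation in the variable $y:=q^p$, solve the resulting quadratic, and identify the correct root as $q(p)^p$. First I would expand the numerator in formula (\ref{S0exp}) using $f_0=f_1=1$ together with the Fibonacci identity $f_p-f_{p-1}=f_{p-2}$: the $2\times 2$ determinant collapses to $q^p(q^p-f_{p-2})$, so combining with (\ref{delta}) one obtains
\begin{equation*}
S(q,0,p)=\frac{q^p(q^p-f_{p-2})}{q^{2p}-(f_{p-2}+f_p)q^p+(-1)^p}.
\end{equation*}
Setting this equal to $2$, clearing denominators, and substituting $y=q^p$ converts $S(q,0,p)=2$ into the quadratic
\begin{equation*}
y^2-(f_{p-2}+2f_p)\,y+2(-1)^p=0.
\end{equation*}

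Next, applying the quadratic formula yields the two candidate roots
\begin{equation*}
y_{\pm}=\tfrac{1}{2}(f_{p-2}+2f_p)\pm \tfrac{1}{2}\sqrt{(f_{p-2}+2f_p)^2-8(-1)^p},
\end{equation*}
and the sign $(-1)^p$ inside the discriminant accounts precisely for the split in (\ref{q(p)}): for even $p$ one has $-8(-1)^p=-8$, while for odd $p$ one has $-8(-1)^p=+8$. So, up to taking $p$-th roots, the proposition reduces to verifying that $q(p)^p=y_{+}$.

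Finally I would justify this selection via a monotonicity argument. For $q>\varphi$, each summand $f_{pk}/q^{pk}$ with $k\geq 1$ is strictly decreasing in $q$ while the $k=0$ summand is the constant $1$, so $q\mapsto S(q,0,p)$ is strictly decreasing on $(\varphi,\infty)$. Using the asymptotic $f_{pk}\sim \varphi^{pk}/\sqrt{5}$, one has $S(q,0,p)\to+\infty$ as $q\to\varphi^+$ (consistent with $\Delta_p(\varphi)=0$ from Proposition \ref{pdelta}) and $S(q,0,p)\to 1$ as $q\to\infty$; by continuity the equation $S(q,0,p)=2$ admits a unique solution $q(p)\in(\varphi,\infty)$. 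Since $y\mapsto y^{1/p}$ is monotone increasing on $(0,\infty)$, the greatest real root in $q$ comes from the greatest real root in $y$, forcing $q(p)^p=y_{+}$. Separating by parity of $p$ then yields (\ref{q(p)}). The only genuinely computational step is the determinant simplification; the remainder is routine and I do not foresee any substantial obstacle.
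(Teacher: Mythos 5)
Your proposal is correct and takes essentially the same route as the paper: both reduce $S(q,0,p)=2$, via the closed form (\ref{S0exp}) and (\ref{delta}), to the quadratic $y^2-(f_{p-2}+2f_p)y+2(-1)^p=0$ in $y=q^p$ and select the larger root. Your added monotonicity argument for identifying $y_+$ is a welcome extra justification of a step the paper leaves implicit, but it is not a different approach.
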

\begin{proof}
 We recall that $q(p)$ is defined as the greatest solution of
$\sum_{k=0}^\infty \frac{f_{kp}}{q^{kp}}=2$
namely of
 \begin{equation*}
S(q,0,p)=\frac{  q^{2p}- f_{p-2} q^p}
 {q^{2p}-(f_{p-2} +f_{p})q^p+(-1)^p }=2.
 \end{equation*}
Solving above equation one gets
 \begin{equation*}
  {q^{2p}+(-f_{p-2} -2f_p)q^p+2(-1)^p }=0 \end{equation*}
 and finally (\ref{q(p)}).
\end{proof}

\section{A characterization of the reachable set via Iterated Function Systems}\label{sifs}
\subsection{Some basic facts about IFSs}
An iterated function system (IFS) is a set of contractive functions $G_j:X\to X$, where $(X,\mathbf d)$ is a metric space.
 We recall that a function if for every $x,y\in X$
$$d(f(x),f(y))< c\cdot d(x,y)$$
for some $c<1$.
In \cite{Hut81} Hutchinson showed that every finite IFS, namely every IFS with finitely many contractions,
admits a unique non-empty compact fixed point $Q$ with respect to the Hutchinson operator $$\mathcal G: S\mapsto \bigcup_{j=1}^J G_j(S).$$
 Moreover for every non-empty compact set $S\subseteq \mathbb C$
$$\lim_{k\to\infty} \mathcal G^k(S)=Q.$$
The \emph{attractor} $Q$ is a self-similar set and it is the only bounded set satisfying  $\mathcal F(Q)=Q$.

\subsection{The reachable set is a projection of the attractor of an IFS}
Let $q>\varphi$, $\mathbf v\in \RR^2$ and consider the linear map from $\RR^2$ onto itself
$$ F_{q,\mathbf v}(\mathbf x)= \mathbf v+ A(q) \mathbf x$$
where
$$A(q)= \begin{pmatrix}
         \frac{1}{q} & \frac{1}{q^2}\\
         1&0
        \end{pmatrix}$$
One has
$$\begin{pmatrix}
   x_{n+2}\\
   x_{n+1}
  \end{pmatrix}= \begin{pmatrix}
u_{n+2}\\
0
\end{pmatrix}+\begin{pmatrix} \frac{1}{q} & \frac{1}{q^2}\\
         1&0\end{pmatrix}\begin{pmatrix}
x_{n+1}\\
x_n
\end{pmatrix}$$
namely
\begin{equation}\label{f}
(x_{n+2},x_{n+1})^T= F_{q,(u_{n+2},0)} (x_{n+1},x_n)^T.
\end{equation}
We now introduce the concept at the base of the symbolic dynamics, which is a particular application from $\mathbf u\in \{0,1\}^\infty$ into itself that iterates in a natural way.
\begin{definition}
The application $\sigma:\{0,1\}^\infty\rightarrow \{0,1\}^\infty$ defined by
\begin{equation}
\sigma(\mathbf u)=\sigma(u_0,u_1,u_2,...)=(u_1,u_2,...)
\end{equation}
it is said \emph{unit shift}.
\end{definition}
 Set
\begin{align*}
Q_\infty&:=\{(x(\mathbf u),x(\sigma (\mathbf u))\mid \mathbf u\in\{0,1\}^\infty\}\\
&=\left\{\left(\sum_{k=0}^\infty \frac{f_k}{q^k}u_k,\sum_{k=0}^\infty \frac{f_k}{q^k}u_{k+1}\right)\mid \mathbf u\in\{0,1\}^\infty\right\}.
 \end{align*}
\begin{proposition}\label{feq}
For every $q>\varphi$
$$\bigcup_{u\in\{0,1\}}F_{q,(u,0)}(Q_\infty)=Q_\infty.$$
\end{proposition}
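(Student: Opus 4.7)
\medskip

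\noindent\textbf{Proof plan.} The plan is to reduce the claim to a single identity expressing $x(\mathbf u)$ in terms of the two shifts $x(\sigma\mathbf u)$ and $x(\sigma^2\mathbf u)$, and then to read off both inclusions from this identity together with the definition of $F_{q,(u,0)}$.

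\emph{Step 1 (the shift identity).} First I would use the Fibonacci relation $f_k=f_{k-1}+f_{k-2}$ for $k\ge 2$ together with $f_0=f_1=1$ to split the defining series of $x(\mathbf u)=\sum_{k=0}^{\infty} f_k q^{-k}u_k$. After an index shift this gives
\begin{equation*}
x(\mathbf u)\;=\;u_0+\frac{1}{q}\sum_{j=0}^{\infty}\frac{f_j}{q^j}u_{j+1}+\frac{1}{q^2}\sum_{j=0}^{\infty}\frac{f_j}{q^j}u_{j+2}\;=\;u_0+\frac{x(\sigma\mathbf u)}{q}+\frac{x(\sigma^2\mathbf u)}{q^2}.
\end{equation*}
The assumption $q>\varphi$ guarantees (via Remark \ref{top}) that all three series converge, so this manipulation is legitimate.

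\emph{Step 2 (inclusion $Q_\infty\subseteq\bigcup_{u}F_{q,(u,0)}(Q_\infty)$).} Given $(x(\mathbf u),x(\sigma\mathbf u))\in Q_\infty$, set $\mathbf u':=\sigma\mathbf u$ so that $(x(\mathbf u'),x(\sigma\mathbf u'))=(x(\sigma\mathbf u),x(\sigma^2\mathbf u))\in Q_\infty$. By the identity in Step 1,
\begin{equation*}
F_{q,(u_0,0)}\!\begin{pmatrix}x(\mathbf u')\\x(\sigma\mathbf u')\end{pmatrix}=\begin{pmatrix}u_0+x(\sigma\mathbf u)/q+x(\sigma^2\mathbf u)/q^2\\ x(\sigma\mathbf u)\end{pmatrix}=\begin{pmatrix}x(\mathbf u)\\x(\sigma\mathbf u)\end{pmatrix},
\end{equation*}
so the arbitrary point of $Q_\infty$ is the image under $F_{q,(u_0,0)}$ of a point of $Q_\infty$.

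\emph{Step 3 (inclusion $\bigcup_{u}F_{q,(u,0)}(Q_\infty)\subseteq Q_\infty$).} Conversely, pick any $u\in\{0,1\}$ and any $(x(\mathbf u'),x(\sigma\mathbf u'))\in Q_\infty$, and define $\mathbf w\in\{0,1\}^\infty$ by $w_0:=u$ and $w_k:=u'_{k-1}$ for $k\ge 1$, so that $\sigma\mathbf w=\mathbf u'$ and $\sigma^2\mathbf w=\sigma\mathbf u'$. Applying Step 1 to $\mathbf w$ yields $x(\mathbf w)=u+x(\mathbf u')/q+x(\sigma\mathbf u')/q^2$, and therefore
\begin{equation*}
F_{q,(u,0)}\!\begin{pmatrix}x(\mathbf u')\\x(\sigma\mathbf u')\end{pmatrix}=\begin{pmatrix}x(\mathbf w)\\x(\sigma\mathbf w)\end{pmatrix}\in Q_\infty.
\end{equation*}

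\emph{Main obstacle.} The only nontrivial point is Step 1: getting the index bookkeeping right when splitting $f_k=f_{k-1}+f_{k-2}$ (the boundary terms at $k=0,1$ and the off-by-one in re-indexing must use $f_0=f_1=1$ carefully). Everything else is purely formal, since the two inclusions are symmetric images of this single algebraic identity combined with the prepend/delete operations on $\{0,1\}^\infty$.
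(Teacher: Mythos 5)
Your proof is correct and follows essentially the same route as the paper: both directions are read off from the identity $F_{q,(u_0,0)}\bigl(x(\sigma\mathbf u),x(\sigma^2\mathbf u)\bigr)=\bigl(x(\mathbf u),x(\sigma\mathbf u)\bigr)$, applied once to $\sigma\mathbf u$ and once to a prepended sequence. The only difference is that you spell out the underlying series identity $x(\mathbf u)=u_0+x(\sigma\mathbf u)/q+x(\sigma^2\mathbf u)/q^2$ (your Step 1), which the paper leaves implicit.
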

\begin{proof}
 Let $\mathbf u=(u_0,u_1,\dots)\in\{0,1\}$. One has
$$F_{q,(u_0,0)}(x(\sigma(\mathbf u)),x(\sigma^2(\mathbf u)))=(x(\mathbf u),x(\sigma(\mathbf u)))$$
and this implies $Q_\infty\subseteq \bigcup_{u\in\{0,1\}} F_{q,(u,0)}(Q_\infty).$
Now let $u\in\{0,1\}$ and $\mathbf d\in\{0,1\}^\infty$. Define $\mathbf u=(c,\mathbf d)=(c,d_0,d_1,\dots)$ and note that $\sigma(\mathbf u)=\mathbf d$. One has
$$F_{q,(u,0)}(x(\mathbf d),x(\sigma(\mathbf d))=(x(\mathbf u),x(\mathbf d))=(x(\mathbf u),x(\sigma(\mathbf u)))$$
and this implies the inclusion $\bigcup_{u\in\{0,1\}} F_{q,(u,0)}(Q_\infty)\subseteq Q_\infty.$
\end{proof}

Note that in general $F_{q,\mathbf v}$ is not a contractive map. However the spectral radius of $A(q)$, say $\rho(q)$, satisfies
$$\rho(q)=\frac{\varphi}{q}<1 \quad \text{for every } q >\varphi$$
Then
$$\lim_{k\to\infty} A^k(q)=0$$
In particular there exists $k(q)$ such that for every $k\geq k(q)$
$$|| A^k(q)||:= \max_{\mathbf x\not=(0,0)} \frac{||A^k(q)\mathbf x||}{||\mathbf x||} <1.$$

\begin{example}
Let $k=2$. One has
$$
||A^2(q)||_2=\frac{q^4+5q^2+1}{q^6}
$$
- see Section \ref{kq} below for a detailed computation of $||A^k(q)||$.
Therefore $||A^2(q)||< 1$ if and only if
$$q^6-q^4-5q^2-1>0,$$
namely $k(q)=2$ for every $q> \bar q\simeq 1.69299$ where $\bar q$ is the unique positive solution of equation $q^6-q^4-5q^2-1=0$.
\end{example}

Now, for every binary sequence of length $k$, say $\mathbf u_k$, define the vector
$$\mathbf v(\mathbf u_k):=\sum_{h=0}^{k-1} A^h(q)\begin{pmatrix}
                                               u_{k+1-h}\\0\end{pmatrix}.$$
Then for every $k$ let
$$G_{q,\mathbf u_k}(\mathbf x)=\mathbf v(\mathbf u_k)+A^k(q)\mathbf x=\sum_{h=0}^{k-1} A^h(q)\begin{pmatrix}
                                               u_{k+1-h}\\
					      0\end{pmatrix}
					      +A^k(q)\mathbf x$$
One has that for $k=1$
\begin{equation}
G_{q,\mathbf u_1}=F_{q,(u_2,0)}
\end{equation}
and, more generally,
\begin{equation}\label{g}
G_{q,\mathbf u_k}=F_{q,(u_{k+1},0)}\circ F_{q,(u_{k},0)}\circ \cdots \circ  F_{q,(u_{2},0)}.
\end{equation}
\begin{remark}
If $\mathbf u_k=(u_{n+2},\cdots, u_{n+1+k})$ then
$$(x_{n+1+k},x_{n+k})^T= G_{q,\mathbf u_k} (x_{n+1},x_n)^T.$$
 \end{remark}

\begin{theorem}\label{pifs}
 For $k\geq k(q)$ and for every $\mathbf u_k\in\{0,1\}^k$ the map $G_{q,\mathbf u_k}$ is a contraction and
\begin{equation}\label{ifseq}
\bigcup_{\mathbf u_k\in \{0,1\}^k} G_{q,\mathbf u_k}(Q_\infty)=Q_\infty.
\end{equation}
 Moreover $Q_\infty(q)$ is the attractor of a two-dimensional linear Iterated Function System (IFS)
$$\mathcal G_{q,k}:=\{G_{q,\mathbf u_k}\mid \mathbf u_k\in\{0,1\}^k\},$$
namely for every compact set $X\subset \RR^2$ one has
$$\lim_{n\to\infty} \mathcal G^n_{q,k}(X)=Q_\infty(q).$$
\end{theorem}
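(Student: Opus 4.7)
The plan is to verify the three ingredients required by the Hutchinson fixed-point theorem recalled at the start of the section: (a) each $G_{q,\mathbf u_k}$ is a contraction, (b) $Q_\infty$ is non-empty and compact, and (c) $Q_\infty$ is invariant under the Hutchinson operator $\mathcal G_{q,k}$. The uniqueness clause of Hutchinson's theorem then forces $Q_\infty$ to coincide with the attractor of $\mathcal G_{q,k}$, and the convergence of $\mathcal G_{q,k}^n(X)$ to $Q_\infty$ for any non-empty compact $X$ is then automatic.

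For (a) I would simply observe that $G_{q,\mathbf u_k}(\mathbf x)=\mathbf v(\mathbf u_k)+A^k(q)\mathbf x$ is affine, so its Lipschitz constant with respect to the Euclidean norm equals $\|A^k(q)\|$. The definition of $k(q)$ stated just above the theorem gives $\|A^k(q)\|<1$ for $k\geq k(q)$, hence contractivity. For (b) I would argue that $\{0,1\}^\infty$ is compact (as a countable product of finite discrete spaces, equivalently with the metric recalled in Remark \ref{top}), and that the map $\mathbf u\mapsto(x(\mathbf u),x(\sigma(\mathbf u)))$ is continuous: continuity of each coordinate follows from the uniform convergence estimate $\sum_{k\geq n}f_k/|z|^k=O((\varphi/q)^n)$ already used in Remark \ref{top}. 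Thus $Q_\infty$ is a continuous image of a non-empty compact set, hence non-empty and compact.

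For (c) I would proceed by induction on $k$, starting from Proposition \ref{feq}, which gives $\bigcup_{u\in\{0,1\}}F_{q,(u,0)}(Q_\infty)=Q_\infty$. Applying $F_{q,(v,0)}$ to both sides and taking the union over $v\in\{0,1\}$ yields
\[
\bigcup_{u,v\in\{0,1\}}F_{q,(v,0)}\circ F_{q,(u,0)}(Q_\infty)=\bigcup_{v\in\{0,1\}}F_{q,(v,0)}(Q_\infty)=Q_\infty,
\]
and iterating this argument $k-1$ times, combined with the factorization (\ref{g}) that identifies the $k$-fold composition with $G_{q,\mathbf u_k}$, gives the desired identity $\bigcup_{\mathbf u_k\in\{0,1\}^k}G_{q,\mathbf u_k}(Q_\infty)=Q_\infty$.

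With (a), (b), (c) in hand, Hutchinson's theorem provides a unique non-empty compact attractor $Q$ of $\mathcal G_{q,k}$, and (b)--(c) force $Q=Q_\infty$; the convergence $\mathcal G_{q,k}^n(X)\to Q_\infty$ for every non-empty compact $X\subset\RR^2$ is then exactly the second assertion of Hutchinson's theorem. The only delicate step is (c): keeping track of the index conventions in the definitions of $\mathbf v(\mathbf u_k)$ and in (\ref{g}) so that the iterated Hutchinson identity really corresponds to the IFS $\mathcal G_{q,k}$ as defined, rather than to a reindexed variant. The contractivity and compactness steps are essentially bookkeeping once the norm estimate for $A^k(q)$ and the continuity of $\mathbf u\mapsto x(\mathbf u)$ are in place.
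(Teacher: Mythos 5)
Your proposal is correct and follows essentially the same route as the paper: contractivity of $G_{q,\mathbf u_k}$ from the definition of $k(q)$, the invariance identity (\ref{ifseq}) by iterating Proposition \ref{feq} through the factorization (\ref{g}), and the attractor claim via the uniqueness of the invariant non-empty compact set in Hutchinson's theorem. The only difference is that you explicitly verify the non-emptiness and compactness of $Q_\infty$ (as a continuous image of $\{0,1\}^\infty$), a hypothesis the paper leaves implicit, relying on the argument of Remark \ref{top}; this is a welcome completion rather than a different approach.
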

\begin{proof}
 By the definition of $k(q)$, $G$ is a contractive map. The equality (\ref{ifseq}) follows by Proposition \ref{feq} and by (\ref{g}). The second part of the statement follows
by the fact that in general the attractor of an IFS is its unique invariant compact set, see for instance \cite{Fal90}.
\end{proof}
\begin{figure}\begin{center}
 \subfloat[$q=2$]{\begin{tabular}{c}\includegraphics[scale=0.5]{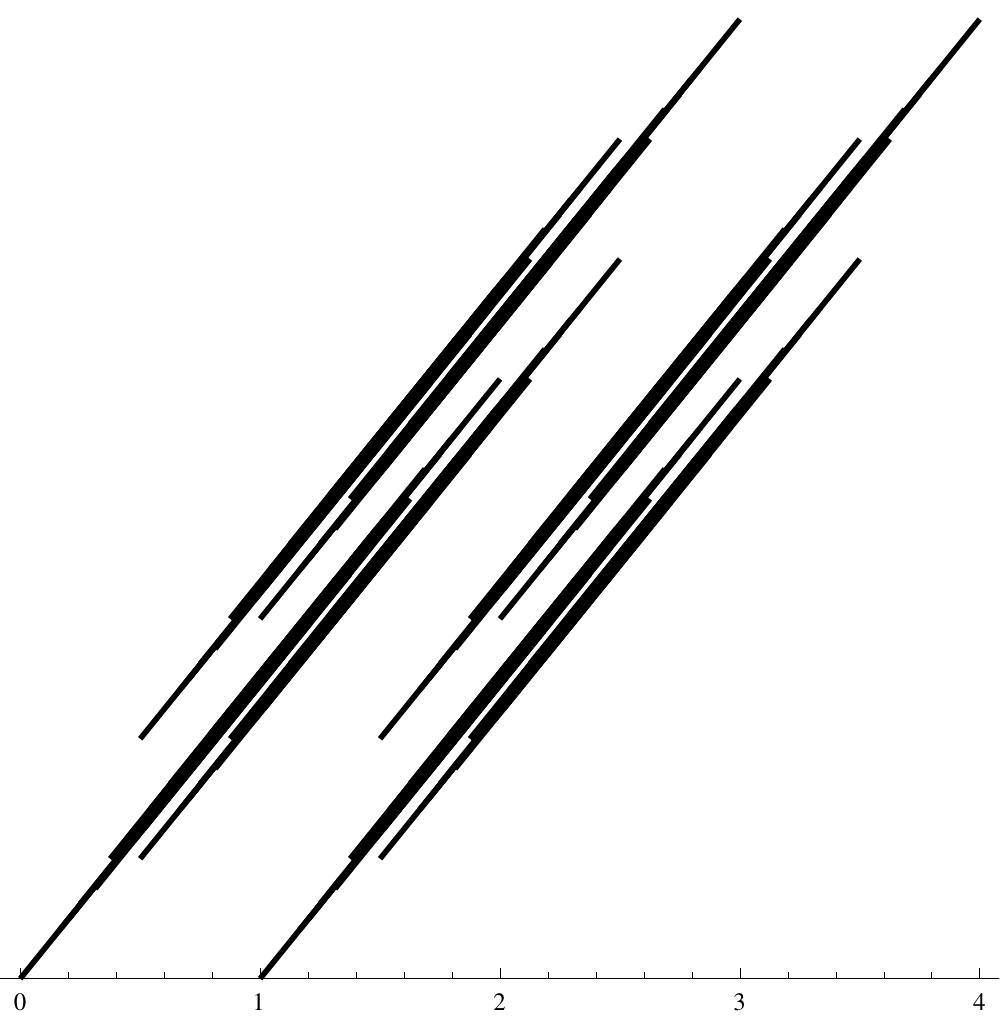}\\\includegraphics[scale=0.5]{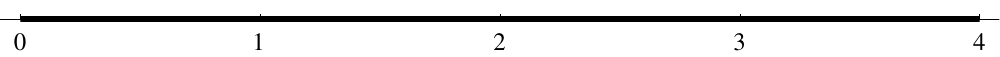}\end{tabular}}\hskip1cm
 \subfloat[$q=3$]{\begin{tabular}{c}\includegraphics[scale=0.5]{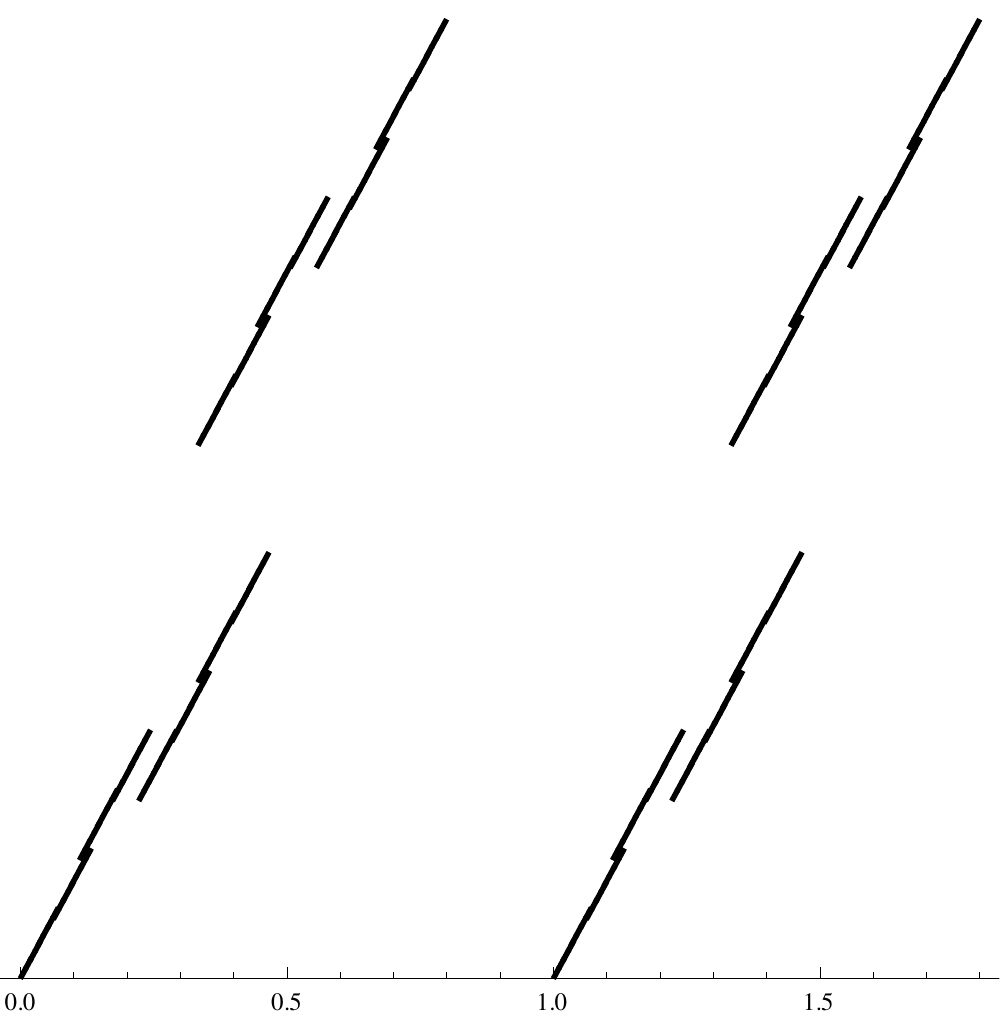}\\\includegraphics[scale=0.5]{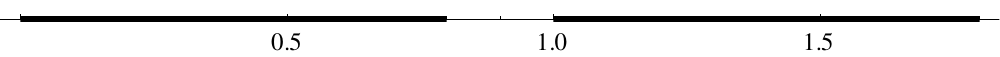}\end{tabular}}
                            \end{center}
\caption{\label{qinfty} An approximation of $Q_\infty(q)$, with $q=2,3$, and of its projection on $x$-axis $R_\infty(q)$. It is obtained by $4$ iterations of the IFS
$\mathcal G_{q,2}$ with initial datum $[0,S(q)]\times[0,S(q)]$.}
\end{figure}

 \begin{remark}[Some remarks on the approximation of $R_\infty$ in the complex case.]\label{rcomplexifs}
  Theorem \ref{pifs} gives an operative way to approximate $Q_\infty(q)$ and, consequently, $R_\infty(q)$, see Figure \ref{qinfty}.\\
Above reasonings clearly apply when considering as a base a complex number $z=q e^{i\omega}$, so that $Q_\infty(z)\subset \mathbb C\times \mathbb C$. Note that
$$Q_\infty (z)\subset H(z):=\{(z_1,z_2)\in \CC\times \CC\mid \max\{|\Re(z_{h})|,|\Im(z_{h})|\}\leq S(|z|),~h=1,2\}$$
 and $\lim_{n\to \infty} \mathcal G^n_{z,k}( H(z))=Q_\infty (z)$.
Then one may approximate $Q_\infty (z)$ by iteratively applying $\mathcal G_{z,k}$ to $H(z)$. To this end, it is possible to employ the isometry between $\CC$ and $\RR^2$ in order to set the problem on $\RR^4$.
Then the real-valued counterpart of $H(z)$ is the hypercube
$$\tilde H(z):=\{\mathbf x \in \RR^4 \mid |\mathbf x|_{\max} \leq S(|z|)\}$$
while we denote by $\tilde {\mathcal G}_{z,k}$ and by $\tilde{G}_{z,\mathbf u}$ the real-valued counterparts of ${\mathcal G}_{z,k}$ and of ${G}_{z,\mathbf u}$, respectively, so that
$$\mathcal G^n_{z,k}(x)=\bigcup_{\mathbf u\in \{0,1\}^{nk}} G_{z,\mathbf u}(x).$$
 We then may get a bidimensional representation of an approximation of $R_\infty(z)$ by projecting $\tilde G^n_{z,k}(\tilde H(z))$ on $\RR^2$.
 However this yields some complexity issues in numerical simulations.
Indeed a brute force attack consists in applying $\tilde{\mathcal G}^n_{z,k}$ to a four-dimensional grid rastering
$\tilde H(z)$ and then projecting the result on $\RR^2$. Thus the generation of an image with5 $N\times N$ pixels involves the computation of $2^{kn} N^4$ points.

In order to restrain the computational cost, we employed the geometric properties of $\tilde G_{q,\mathbf u_k}$.
Indeed for every $\mathbf u$, $\tilde G_{z,\mathbf u}$ is an affine map, thus it preserves parallelism and convexity.
 In view of these properties we considered only the $16$ vertices of $\tilde H(z)$, say $\mathbf x_j$, with $j=1,\dots,16$. Our method consists in computing the
 $\tilde G_{z,\mathbf u}(\mathbf x_j)$'s separately, in projecting the result (namely $2^{kn}$ points) on $\RR^2$ and finally on computing their convex hull, employing the fact that this
projection, say $\pi$, preserves convexity, too.  In other words we employed the identity
$$\pi(\tilde {G}_{z,\mathbf u}(\tilde H(z)))=\pi(\tilde {G}_{z,\mathbf u}(co(\{\mathbf x_j\})))=co(\pi(\tilde G_{z,\mathbf u}(\mathbf x_j))),$$
so that
$$\tilde G^n_{z,k}(\tilde H(z))=\bigcup_{\mathbf u\in\{0,1\}^{kn}}co(\pi(\tilde G_{z,\mathbf u}(\mathbf x_j))).$$
With this method we need to compute $2^{kn}\cdot 16$ points and we may possibly store the result on a vectorial format, instead of a raster one.
See Figure \ref{rinftyz2} and Figure \ref{rinftyz} for some examples.
\end{remark}
\begin{figure}\begin{center}
\subfloat[$n=1$]{\includegraphics[scale=0.11]{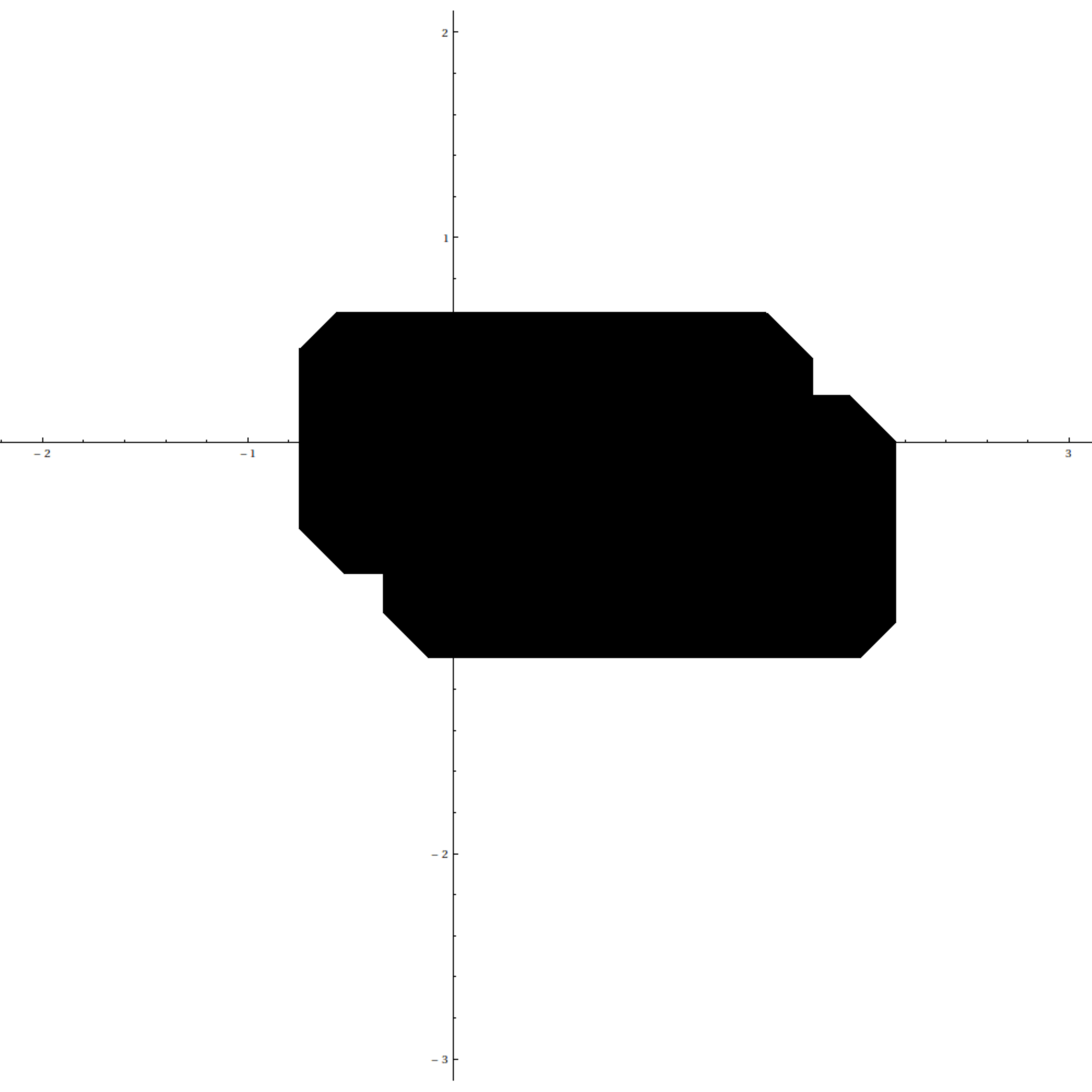}}\hskip0.1cm
\subfloat[$n=2$]{\includegraphics[scale=0.11]{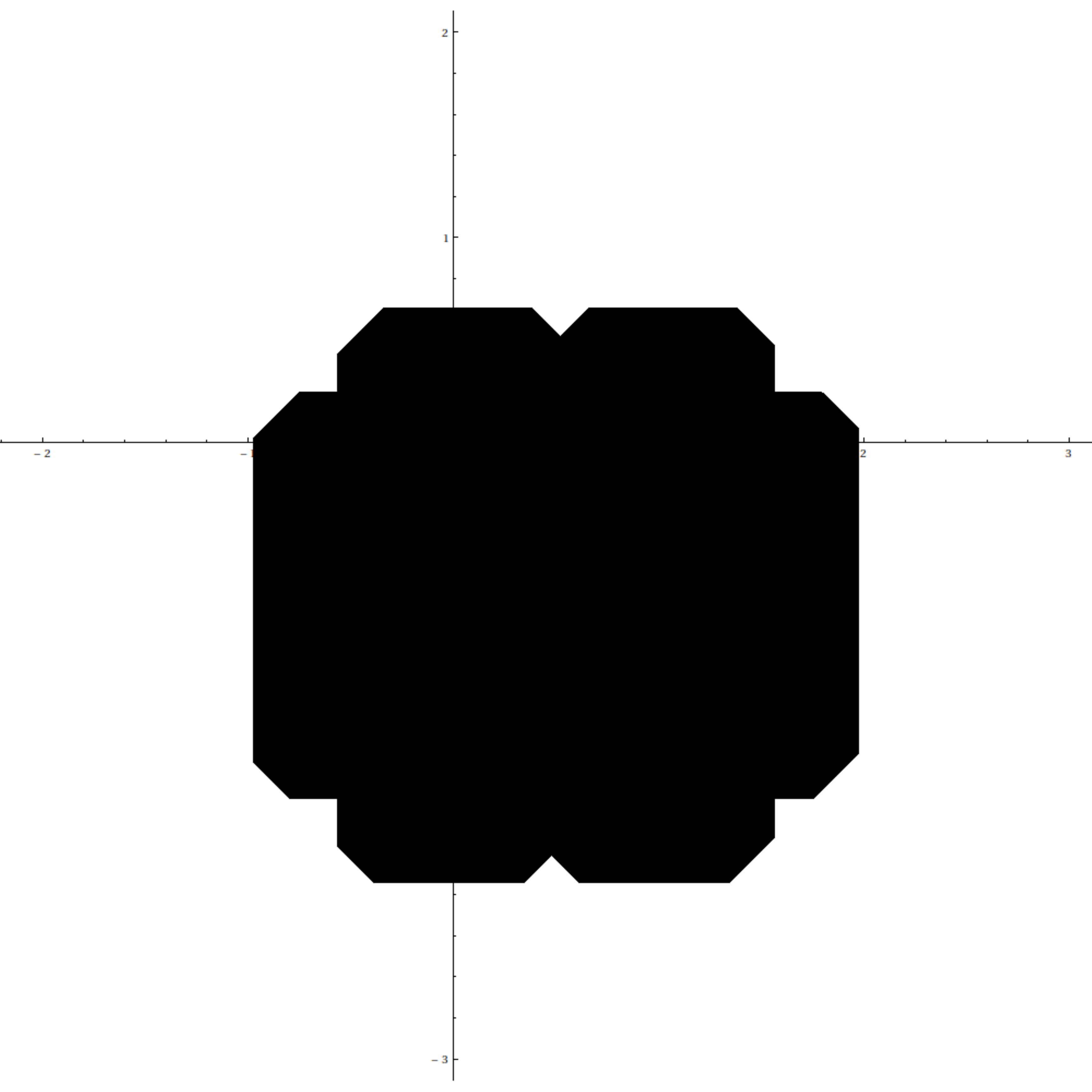}}\hskip0.1cm
\subfloat[$n=3$]{\includegraphics[scale=0.11]{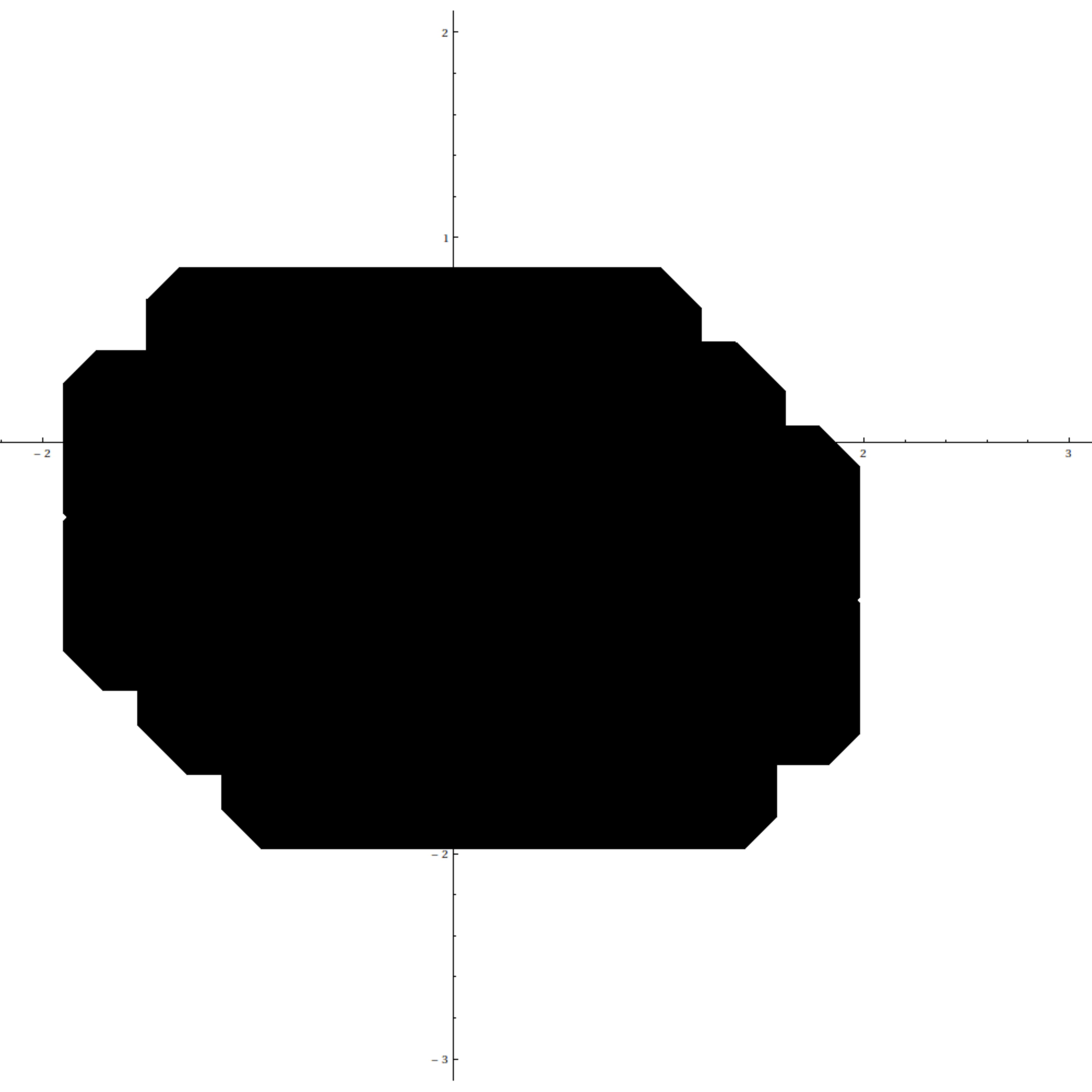}}\\
\subfloat[$n=4$]{\includegraphics[scale=0.11]{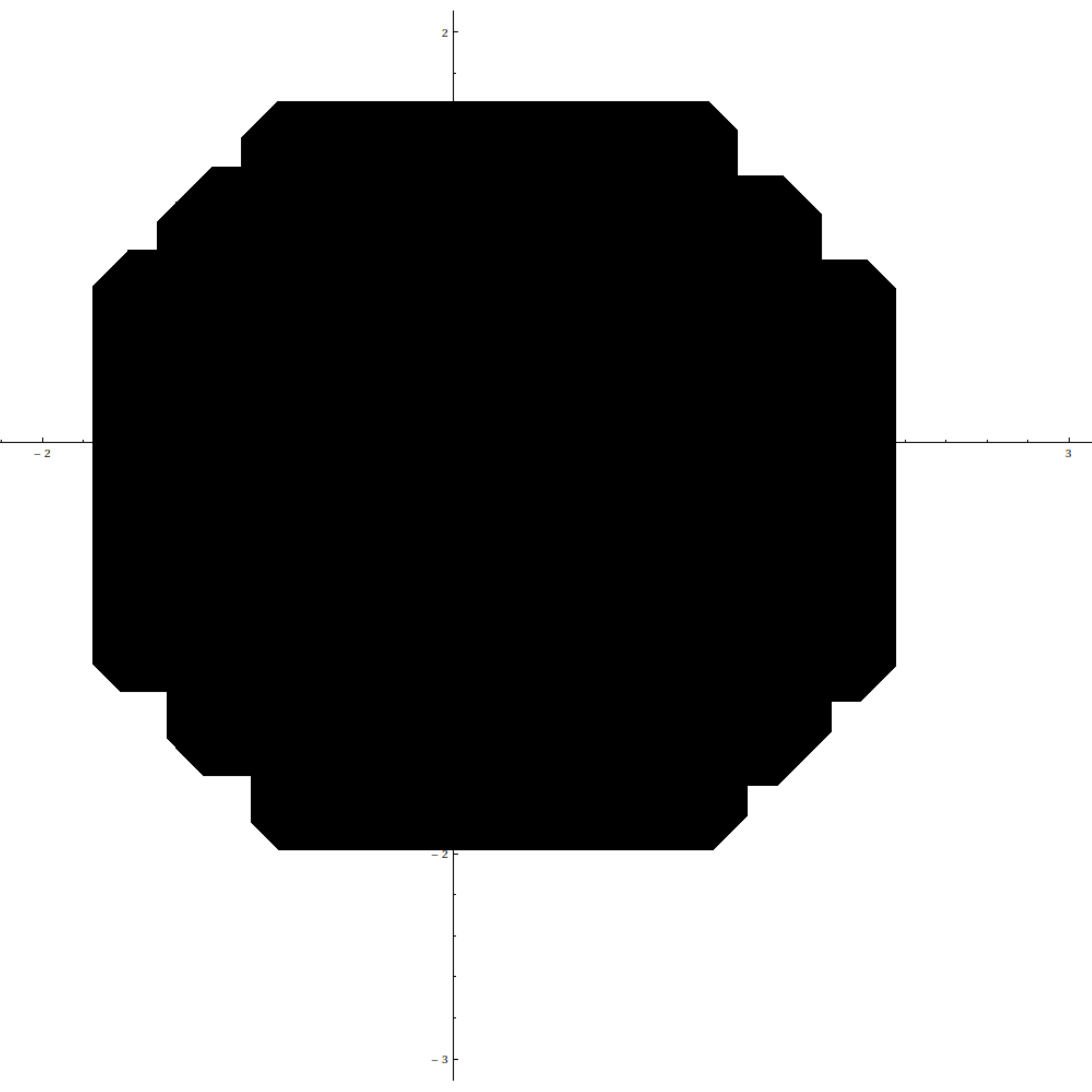}}\hskip0.1cm
\subfloat[$n=5$]{\includegraphics[scale=0.11]{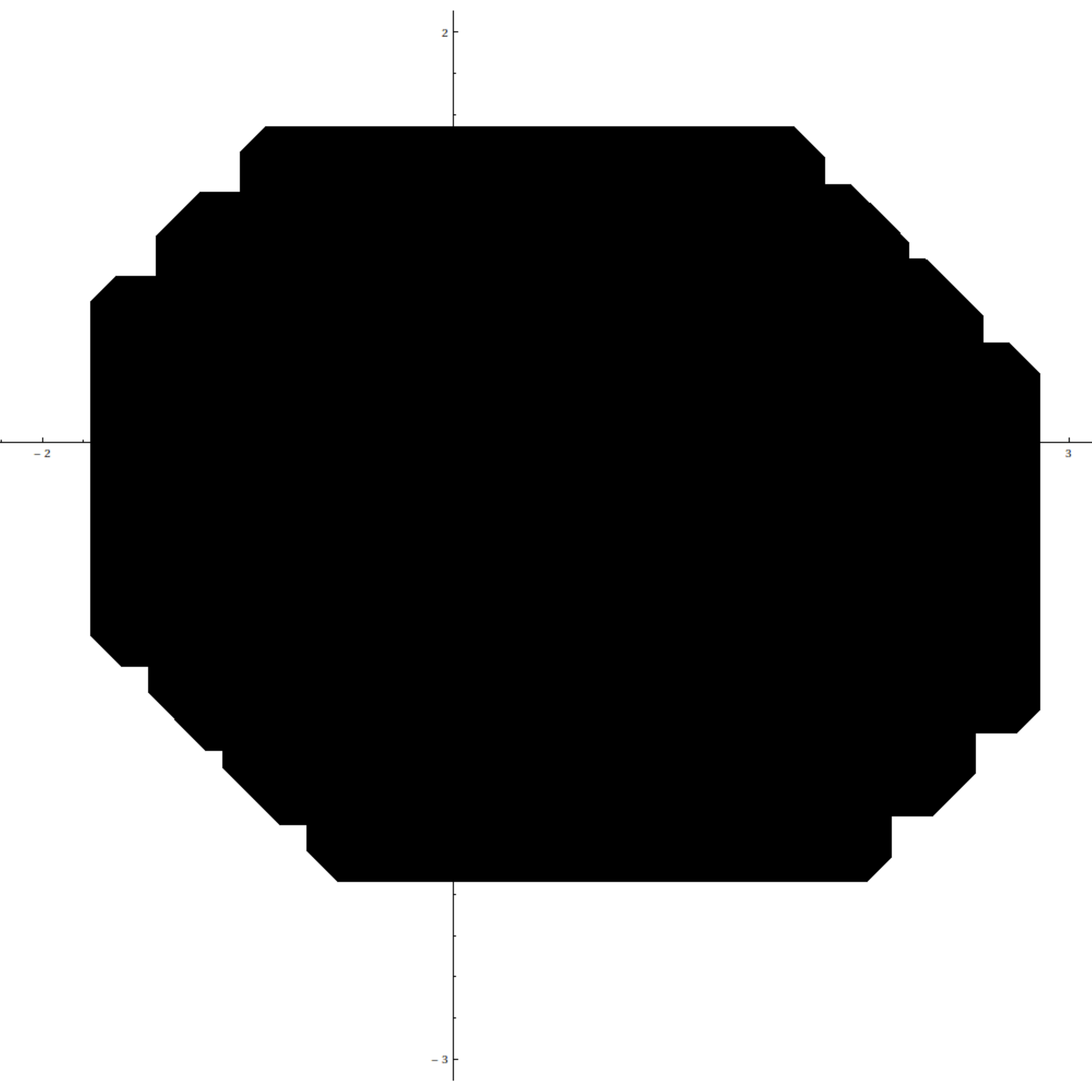}}\hskip0.1cm
\subfloat[$n=6$]{\includegraphics[scale=0.11]{test_q0_n12.pdf}}
                            \end{center}
\caption{\label{rinftyz} Various iterations of $\tilde{\mathcal G}^n_{z,k} (\tilde H(z))$ with $z=q(8)e^{i\pi/4}$. }
\end{figure}

\begin{figure}
\begin{center}
\subfloat[$n=1$]{\includegraphics[scale=0.11]{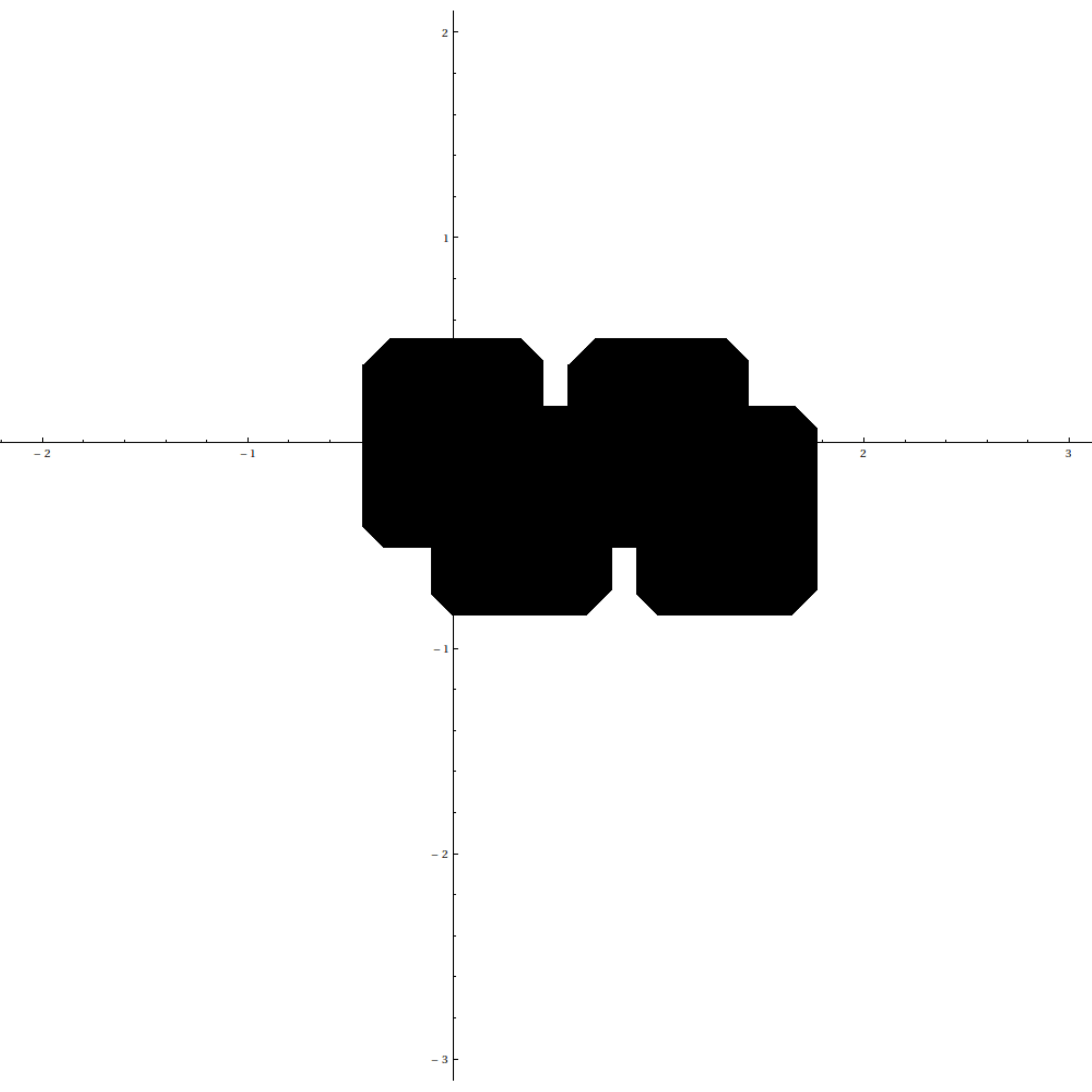}}\hskip0.1cm
\subfloat[$n=2$]{\includegraphics[scale=0.11]{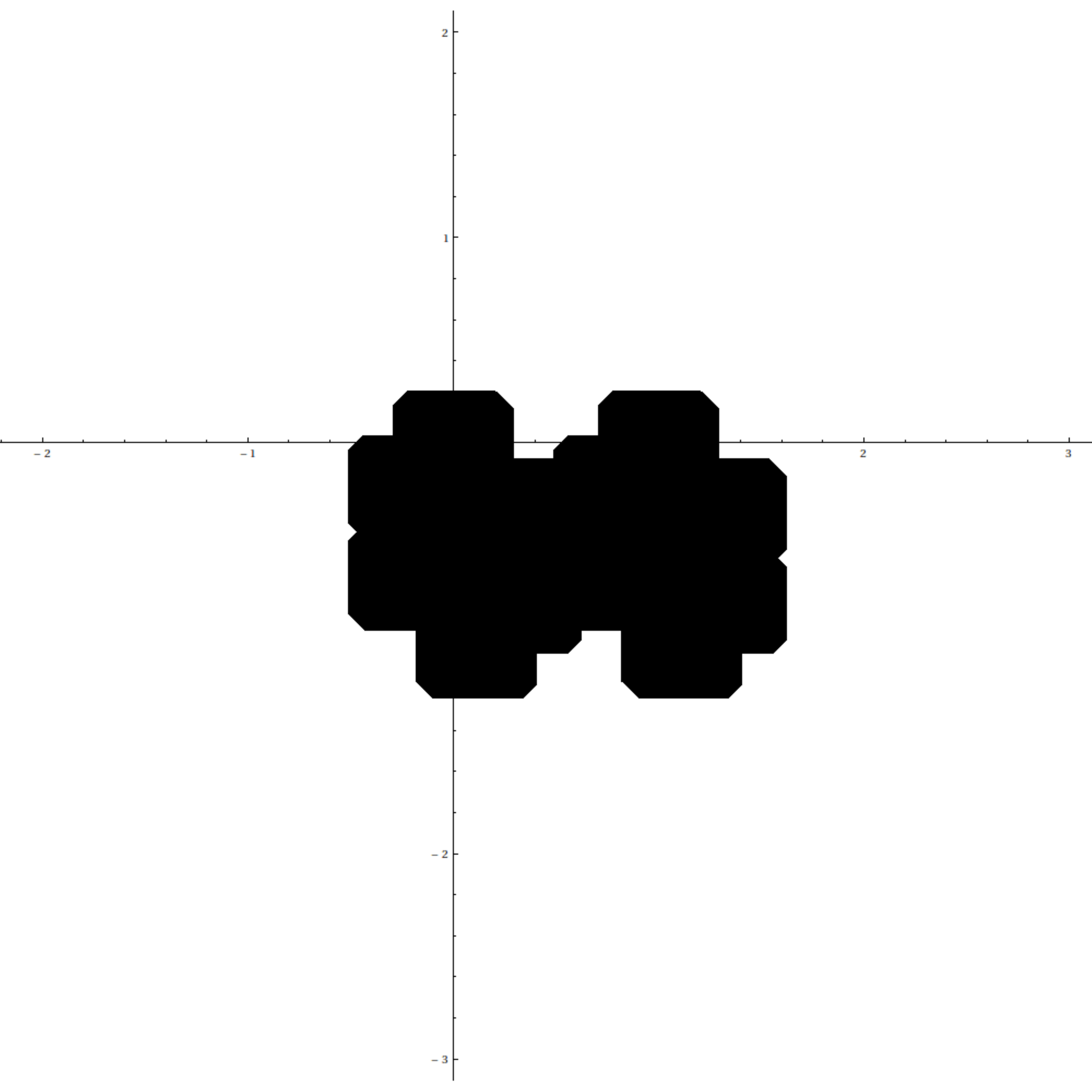}}\hskip0.1cm
\subfloat[$n=3$]{\includegraphics[scale=0.11]{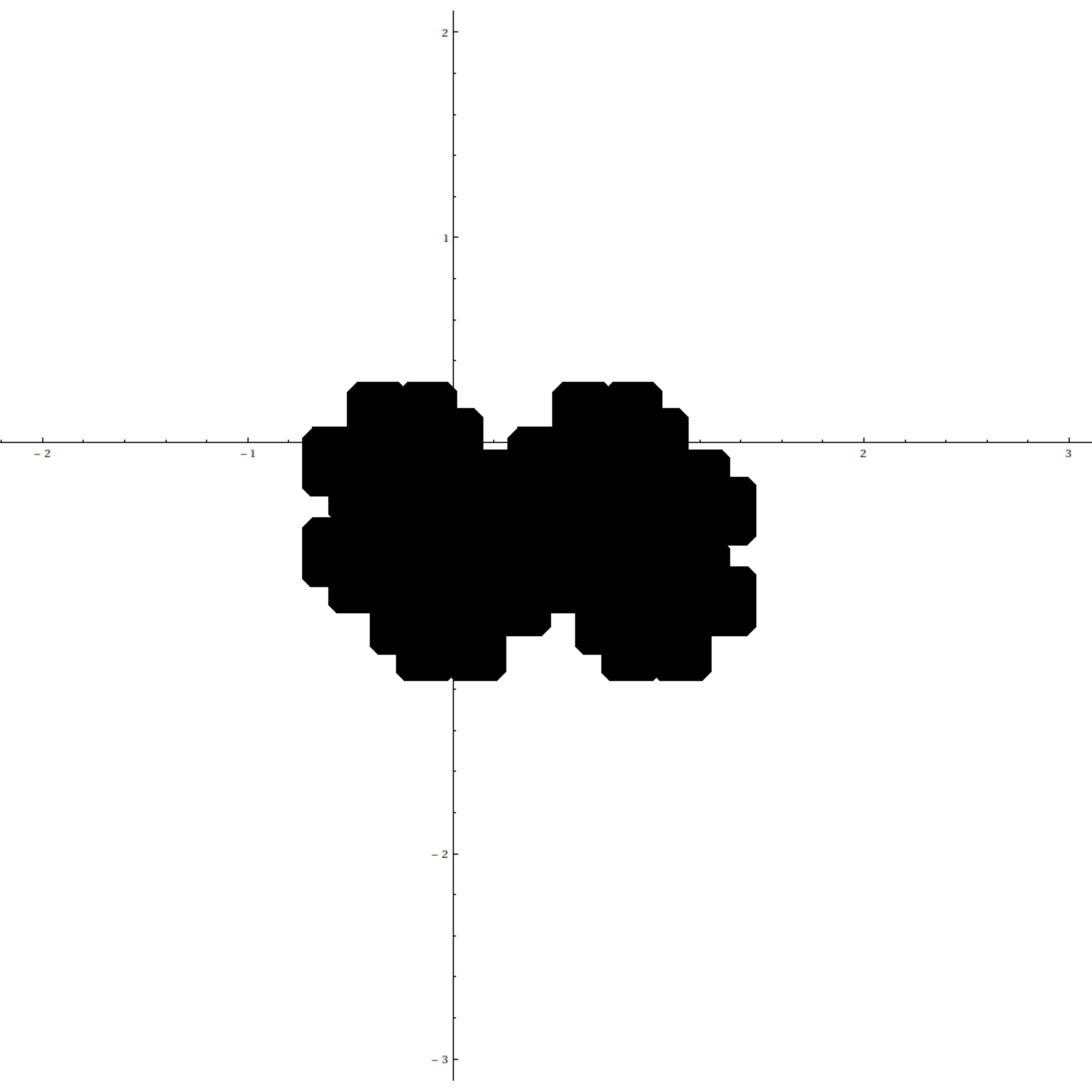}}\\
\subfloat[$n=4$]{\includegraphics[scale=0.11]{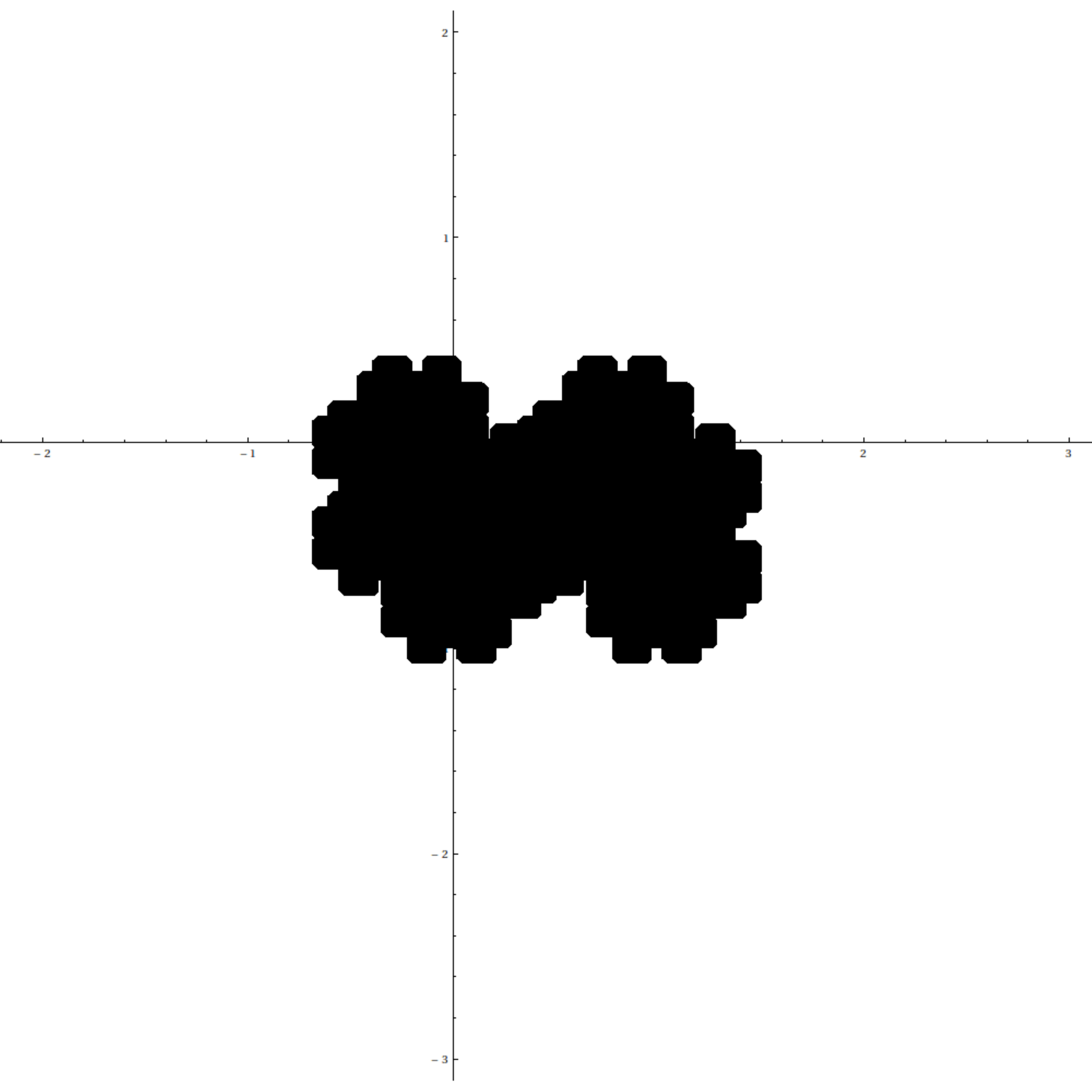}}\hskip0.1cm
\subfloat[$n=5$]{\includegraphics[scale=0.11]{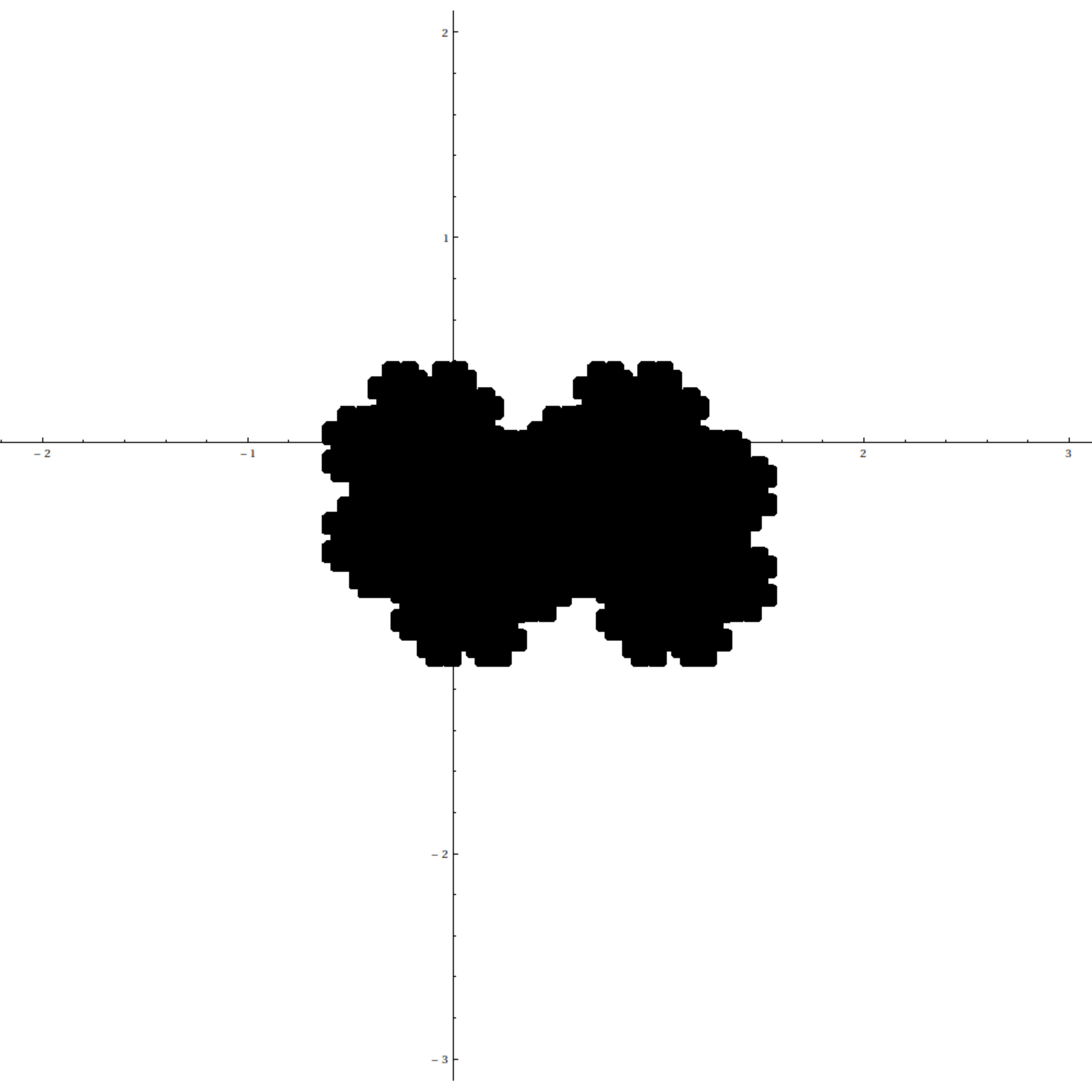}}\hskip0.1cm
\subfloat[$n=6$]{\includegraphics[scale=0.11]{test_q4_n12.pdf}}
 \end{center}
\caption{\label{rinftyz2} Various iterations of $\tilde{\mathcal G}^n_{z,2} (\tilde H(z))$ with $z=(q(8)+0.3)e^{i\pi/4}$. Notice the similarity with the twin-dragon curve, generated by
expansions in complex base with argument again $\pi/4$. }
\end{figure}

\begin{remark}[Some remarks on the analogies with expansions in non-integer bases]
We notice that the $G_{q,\mathbf u_k}$'s share the same scaling factor, $A^k(q)$, and they differ for the translation component $\mathbf v(\mathbf u_k)$. A similar structure also emerges for the
one-step recursion case, generating power series with coefficients in $\{0,1\}$. Indeed
\begin{equation}\label{1dc}
x_n=\sum_{k=0}^n\frac{u_{n-k}}{q^k}\quad \Leftrightarrow \quad \begin{cases}
                                                              x_0=u_0\\
                                                              x_{n+1}=u_{n+1}+\frac{x_n}{q}.
                                                             \end{cases}
\end{equation}
and setting
$$\bar R_\infty:=\left\{\sum_{k=0}^\infty\frac{u_k}{q^k}\mid u_k\in\{0,1\}\right\}$$
one has that
$$\bar R_\infty = \bigcup_{u\in\{0,1\}} \bar G_{q,c}(\bar R_\infty)$$
where $$\bar G_{q,c}(x)=c+\frac{x}{q}.$$
The differences and analogies between the two systems can be summarized as follows
\begin{enumerate}
 \item both systems converge to power series;
\item $\bar R_\infty$ can be generated by a one-step recursive algorithm and it is the attractor of a one-dimensional IFS,  the radius of convergence is  $1$. The buffer needed (i.e. the number of digits the IFS depends on) is constantly equal to
$1$;
\item $R_\infty$ can be generated by a two-steps recursive algorithm and it is the attractor of a two-dimensional IFS, the radius of convergence is  $\varphi$. The buffer needed, $k(q)$,  depends on $q$ and it goes to infinity as $q$ tends to $\varphi$ from above.
\end{enumerate}
 \end{remark}

\subsection{A sufficient contractivity condition}\label{kq}
In what follows we provide an upper estimate for $k(q)$.
\begin{proposition}[An upper estimate for the Fibonacci sequence]
\label{Fnphi}
For every $n\in\NN$
$$
f_{n+1}\leq \varphi^n.
$$
\end{proposition}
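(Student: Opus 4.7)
The plan is to proceed by strong induction on $n$, exploiting the defining recurrence $f_{n+2}=f_{n+1}+f_n$ together with the defining identity of the golden mean, namely $\varphi^{2}=\varphi+1$, which can be rewritten as $\varphi^{k+1}=\varphi^{k}+\varphi^{k-1}$ for every integer $k$. This is the only algebraic ingredient one needs: it matches the Fibonacci recurrence perfectly, so a one-line induction goes through.

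First I would verify the base cases directly against the definition $f_0=f_1=1$, comparing with $\varphi^0=1$ (and, if desired for convenience, with the values of $\varphi$ and $\varphi^2$ to cover the next index). These reduce to numerical checks that set up the strong induction.

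For the inductive step, assume as hypothesis that the bound holds at the two previous indices (this is what the two-term recurrence requires). Then I would combine the recurrence with the hypothesis,
\begin{equation*}
f_{n+2}=f_{n+1}+f_{n}\le \varphi^{n}+\varphi^{n-1}=\varphi^{n-1}(\varphi+1)=\varphi^{n-1}\cdot \varphi^{2}=\varphi^{n+1},
\end{equation*}
which is exactly the claimed estimate at the next index. The induction then closes.

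I do not anticipate any real obstacle here: the proof is essentially a one-line computation once the identity $\varphi+1=\varphi^2$ is in place, and the only thing to be careful about is that strong induction (two previous terms) is required because the Fibonacci recurrence is two-step. One could alternatively give a non-inductive argument using Binet's formula $f_n=(\varphi^{n+1}-\psi^{n+1})/\sqrt{5}$ with $\psi=(1-\sqrt{5})/2$ and the bound $|\psi|<1$, but the induction above is shorter and self-contained.
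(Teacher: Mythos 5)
Your inductive step is sound and is essentially the paper's own argument (the paper runs the same two-term induction on the recurrence, in the slightly stronger sandwich form $f_{i+1}<\varphi^{i}<f_{i+2}$), but there is a genuine gap exactly at the point you defer: the base cases. You assert that they ``reduce to numerical checks'' without carrying them out, and under the normalization the paper fixes in Section \ref{smodel}, namely $f_0=f_1=1$ (so $f_2=2$, $f_3=3$, $f_4=5,\dots$), the check at $n=1$ reads $f_2=2\leq\varphi\approx1.618$, which is false. A two-term induction needs two consecutive valid base cases to start, so the argument cannot be launched; worse, the inequality $f_{n+1}\leq\varphi^{n}$ actually fails for every $n\geq1$ with this indexing, since the Binet formula you quote, $f_n=(\varphi^{n+1}-\psi^{n+1})/\sqrt{5}$ with $\psi=(1-\sqrt{5})/2$ (which is the correct form precisely for the normalization $f_0=f_1=1$), gives $f_{n+1}/\varphi^{n}\to\varphi^{2}/\sqrt{5}\approx1.17>1$. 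Your proposed non-inductive fallback, had you executed it, would therefore have disproved the statement rather than proved it.

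What is true, and what your inductive mechanism does prove without any further change, is $f_{n}\leq\varphi^{n}$ (base cases $f_0=1=\varphi^{0}$ and $f_1=1\leq\varphi$), or equivalently $f_{n+1}\leq\varphi^{n}$ under the shifted convention $f_1=f_2=1$. For what it is worth, the paper's own proof silently adopts that shifted convention: it rewrites $f_2<\varphi<f_3<\varphi^{2}<f_4$ as $1<\varphi<2<\varphi+1<3$, i.e.\ it takes $f_2=1$, $f_3=2$, $f_4=3$, in conflict with the definition $f_0=f_1=1$ stated earlier. To make your write-up correct you should either prove the reindexed statement $f_n\leq\varphi^{n}$ (which suffices for every later use of Proposition \ref{Fnphi}, up to adjusting constants) or explicitly flag and resolve the indexing inconsistency; the inductive step you wrote then closes as is.
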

\begin{proof}
By induction on $n$. First, as base cases, we will consider the cases when $n = 1$ and $n = 2$. Note that $1 < \varphi < 2$. By adding 1 to each term in the inequality, we obtain
 $2 < \varphi + 1 < 3$. The two inequalities together yield
$$1 < \varphi < 2 < \varphi + 1 < 3.$$
Using the relation $\varphi + 1=\varphi^2$ and the first few Fibonacci numbers, we can rewrite this as
$$f_2 < \varphi < f_3 < \varphi^2 < f_4$$
which shows that the statement is true for $n = 1$ and $n = 2$. Now, as the induction hypothesis, suppose that $f_{i+1} < \varphi_i < f_{i+2}$ for all $i$ such that $0 \leq i \leq k + 1$.
$$f_{k+2} < \varphi^{k+1} < f_{k+3}$$
and
$$f_{k+1} < \varphi^{k} < f_{k+2}.$$
Adding each term of the two inequalities, we obtain
$$f_{k+2} + f_{k+1} < \varphi^{k+1} + \varphi^{k} < f_{k+3} + f_{k+2}.$$
Using the relation $\varphi^{k+1} + \varphi^{k}=\varphi^{k+2}$ and the first few Fibonacci numbers, we can rewrite this inequality as
$$f_{k+3} < \varphi^{k+2} < f_{k+4}$$
which shows that the inequality holds for $n = k+ 2$.
\end{proof}

\begin{lemma}[Explicit computation of $A^k(q)$]
\label{ak}
For every $q>\varphi$ and for every $k\in\NN$
\begin{equation}\label{aq}
A^k(q)=\frac{1}{q^{k+1}}\left(
  \begin{array}{cc}
    f_{k+1} q & f_k \\
    f_k q^2 & f_{k-1}q \\
  \end{array}
\right).
\end{equation}
\end{lemma}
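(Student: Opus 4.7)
The plan is to prove the formula by induction on $k$, exploiting the Fibonacci recurrence $f_{n+1}=f_n+f_{n-1}$ at the inductive step. This is the natural approach because the matrix $A(q)$ is essentially a scaled/conjugated version of the classical Fibonacci Q-matrix $\bigl(\begin{smallmatrix}1&1\\1&0\end{smallmatrix}\bigr)$, and powers of that matrix are well known to have entries given by consecutive Fibonacci numbers.

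I would first verify the base case by direct substitution, checking that
$$\frac{1}{q^{2}}\begin{pmatrix} f_{2}\, q & f_{1} \\ f_{1}\, q^{2} & f_{0}\, q \end{pmatrix}$$
reproduces $A(q)=\bigl(\begin{smallmatrix}1/q & 1/q^{2} \\ 1 & 0\end{smallmatrix}\bigr)$, which fixes the appropriate Fibonacci indexing (so that $f_{0}=0$, $f_{1}=f_{2}=1$ in the entries of the matrix formula). This is just a one-line calculation.

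For the inductive step, assuming the formula for $A^{k}(q)$, I would compute
$$A^{k+1}(q)=A(q)\cdot A^{k}(q)=\begin{pmatrix} 1/q & 1/q^{2} \\ 1 & 0 \end{pmatrix}\cdot\frac{1}{q^{k+1}}\begin{pmatrix} f_{k+1}\, q & f_{k} \\ f_{k}\, q^{2} & f_{k-1}\, q \end{pmatrix}$$
and expand entry by entry. The top row collapses via the Fibonacci recurrence: the $(1,1)$ entry becomes $\frac{f_{k+1}+f_{k}}{q^{k+1}}=\frac{f_{k+2}}{q^{k+1}}$, and the $(1,2)$ entry becomes $\frac{f_{k}+f_{k-1}}{q^{k+2}}=\frac{f_{k+1}}{q^{k+2}}$. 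The bottom row is just the top row of $A^{k}(q)$, namely $\frac{1}{q^{k+1}}(f_{k+1}\, q,\, f_{k})$. Re-collecting the scalar factor $q^{-(k+2)}$ yields
$$A^{k+1}(q)=\frac{1}{q^{k+2}}\begin{pmatrix} f_{k+2}\, q & f_{k+1} \\ f_{k+1}\, q^{2} & f_{k}\, q \end{pmatrix},$$
which is exactly the claimed formula with $k$ replaced by $k+1$.

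There is no genuine obstacle here: the proof is pure bookkeeping, and the only step where anything nontrivial happens is the use of $f_{n}+f_{n-1}=f_{n+1}$ to convert the two sums arising from matrix multiplication into single Fibonacci numbers. The main thing to be careful about is the indexing convention, which must be compatible with the base case so that the telescoping via the recurrence is clean.
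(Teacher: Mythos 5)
Your proof is correct and essentially identical to the paper's: both proceed by induction on $k$, verifying the base case directly and using the recurrence $f_{n+1}=f_n+f_{n-1}$ to collapse the entries of the matrix product (the paper computes $A^{k}(q)A(q)$ where you compute $A(q)A^{k}(q)$, which is immaterial). Your explicit observation that the base case forces the indexing $f_0=0$, $f_1=f_2=1$ is a worthwhile point the paper glosses over (it merely asserts the base step is ``trivially satisfied''), since under the paper's globally declared convention $f_0=f_1=1$ the stated formula would already fail at $k=1$.
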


\begin{proof}
By induction on $k$. Base step, $k=1$, is trivially satisfied. Assume now (\ref{aq}) as inductive hypothesis. For $k+1$ we have
$$
A^{k+1}(q)=A^{k}(q) A(q)=\frac{1}{q^{k+2}}\begin{pmatrix}
                        (f_{k+1}+f_k) q & f_{k+1} \\
    (f_k + f_{k-1})q^2 & f_{k} \\
                      \end{pmatrix}=\frac{1}{q^{k+2}}\begin{pmatrix}
                        f_{k+2}q & f_{k+1} \\
    f_{k+1}q^2 & f_{k}q \end{pmatrix}.
$$
and this concludes the proof.
\end{proof}

\begin{proposition}
\label{kq1}
For every $q>\varphi$
\begin{equation}\label{kfirst}
k(q)\leq\frac{\ln\left(\frac{1}{\varphi^2 q^2}(q^4+3 q^2 +1)\right)}{2\left(\ln q - \ln \varphi\right)}.
\end{equation}
\end{proposition}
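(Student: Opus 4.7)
The strategy is to find a value $k$ such that $\|A^k(q)\|<1$ and then read off the threshold. I would start from the explicit formula provided by Lemma~\ref{ak}, which gives
\[
A^k(q)=\frac{1}{q^{k+1}}\begin{pmatrix} f_{k+1}\,q & f_k \\ f_k\,q^2 & f_{k-1}\,q \end{pmatrix},
\]
and estimate $\|A^k(q)\|$ by the Frobenius norm (which upper-bounds the operator 2-norm, so it is enough to force the former below $1$). Summing the squared entries yields
\[
\|A^k(q)\|^2 \;\le\; \frac{f_k^2\,q^4+\bigl(f_{k+1}^2+f_{k-1}^2\bigr)q^2+f_k^2}{q^{2k+2}}.
\]

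The next step is to replace Fibonacci numbers by powers of the Golden Mean, via Proposition~\ref{Fnphi}. Using $f_m\le \varphi^{m-1}$ for each appearing index (so $f_k^2\le \varphi^{2k-2}$, $f_{k+1}^2\le \varphi^{2k}$, $f_{k-1}^2\le \varphi^{2k-4}$), one can pull $\varphi^{2k-2}$ out of the numerator and obtain
\[
\|A^k(q)\|^2 \;\le\; \frac{\varphi^{2k-2}\bigl(q^4+(\varphi^2+\varphi^{-2})q^2+1\bigr)}{q^{2k+2}}.
\]
The key algebraic simplification is the identity $\varphi^2+\varphi^{-2}=3$, which follows from $\varphi^2=\varphi+1$ and $\varphi^{-1}=\varphi-1$. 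Substituting gives the clean bound
\[
\|A^k(q)\|^2 \;\le\; \Bigl(\tfrac{\varphi}{q}\Bigr)^{\!2k}\cdot\frac{q^4+3q^2+1}{\varphi^2 q^2}.
\]

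From here, to guarantee $\|A^k(q)\|<1$ it suffices to impose that the right-hand side be strictly less than $1$; taking logarithms (legitimate since $q>\varphi$ makes both sides positive and $\ln q-\ln\varphi>0$) one isolates $k$ and arrives exactly at
\[
k \;>\; \frac{\ln\!\left(\frac{q^4+3q^2+1}{\varphi^2 q^2}\right)}{2(\ln q-\ln\varphi)},
\]
which is the stated estimate for the threshold $k(q)$.

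The main obstacles I expect are (i) recognising that a submultiplicative matrix norm easier to compute than the spectral norm (namely Frobenius) is what produces the clean right-hand side, and (ii) performing the telescoping of Fibonacci powers so that exactly the coefficient $\varphi^2+\varphi^{-2}$ appears in front of the middle term, where the Golden Mean identity then collapses it to $3$. Everything else is bookkeeping: the exponent $\varphi^{2k-2}$ combines with $q^{-(2k+2)}$ to produce the $\varphi^{-2}q^{-2}$ normalisation and the decay factor $(\varphi/q)^{2k}$ that drives the inequality.
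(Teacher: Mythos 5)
Your proposal is correct and follows essentially the same route as the paper: the paper also reduces to $\|A^k(q)\|^2\le \mathrm{tr}(B^TB)/q^{2k+2}$ with $B$ the matrix from Lemma~\ref{ak}, which is exactly your Frobenius bound, and then applies Proposition~\ref{Fnphi} termwise to get the factor $\varphi^{2k-2}(q^4+3q^2+1)$ before taking logarithms. The only difference is cosmetic: the paper first computes the characteristic polynomial of $B^TB$ and invokes Cassini's identity to identify its determinant as $q^4$ before discarding that information in favour of the trace bound, a detour your direct appeal to the spectral-norm-versus-Frobenius-norm inequality avoids.
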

\begin{proof}
Fix $k$ and set
$$B(q):=\left(
  \begin{array}{cc}
    f_{k+1} q & f_k \\
    f_k q^2 & f_{k-1}q \\
  \end{array}
\right)$$
so that, by Lemma \ref{ak}, one has
$$A^k(q)=\frac{1}{q^{k+1}}B(q).
$$
Denote by $\lambda_{max}(A)$ the greatest eigenvalue of $A$ in modulus. One has that the matrix norm consistent with Euclidean norm satisfies the following identity
$$||A||:=\max_{\mathbf x\not=(0,0)}||A\mathbf x||=\sqrt{\lambda_{max}(A^T A)}.$$
Then
$$
||A^k(q)||=\frac{||B(q)||}{q^{k+1}}=\sqrt{\lambda_{max}(B^T(q) B(q))}.
$$
The product matrix $B^T(q) B(q)$ has the form:
$$
B^T(q) B(q)=\left(
  \begin{array}{cc}
    f_{k+1}^2 q^2+f_k^2 q^4 & f_k f_{k+1} q+f_{k}f_{k-1}q^3 \\
    f_k f_{k+1} q+f_{k}f_{k-1}q^3 & f_{k}^2+f_{k-1}^2 q^2 \\
  \end{array}
\right).
$$
The characteristic polynomial $p(\lambda)$ associated to $B^T(q) B(q)$ is hence
$$
\begin{array}{c}
  p(\lambda)=\lambda^2-\lambda\left(f_{k+1}^2 q^2+f_k^2 q^4+f_{k}^2+f_{k-1}^2 q^2\right)+ \\
  +q^4\left(f_{k-1}^2 f_{k+1}^2+f_k^4 - 2 f_{k+1}f_{k-1}f_k^2\right).
\end{array}
$$
The free term of characteristic polynomial is linked to algebraic identities involving the Fibonacci numbers,
$$
f_{k-1}^2 f_{k+1}^2+f_k^4 - 2 f_{k+1}f_{k-1}f_k^2=1.
$$
In fact
$$
f_{k-1}^2 f_{k+1}^2+f_k^4 - 2 f_{k+1}f_{k-1}f_k^2 =\left(f_k^2 - f_{k-1} f_{k+1}\right)^2
$$
involving Cassini's identity
$$f_{n-1}f_{n+1}-f_n^2=(-1)^{n+1},$$
Then, the characteristic polynomial becomes:
$$
p(\lambda)=\lambda^2-\lambda\left(f_{k+1}^2 q^2+f_k^2 q^4+f_{k}^2+f_{k-1}^2 q^2\right)+q^4.
$$
Set $\bar \lambda_{max}=f_k^2 q^4+(f_{k+1}^2+f_{k-1}^2) q^2+ f_{k}^2$ and note that
\begin{align*}
  \lambda_{max}(B^T(q)B(q))&= \frac{1}{2}\left(\bar \lambda_{max}+\sqrt{\lambda_{max}^2-4q^2}\right)\leq \bar \lambda_{max}.
\end{align*}
Furthermore by Proposition \ref{Fnphi} we have
$$
\bar{\lambda}_{max}\leq \varphi^{2k-2} q^4+(\varphi^{2k}+\varphi^{2k-4}) q^2+ \varphi^{2k-2}=\varphi^{2k-2}(q^4+3 q^2 +1)
$$
and finally
$$
||A^k(q)||=\frac{\lambda_{max}}{q^{2k+2}}\leq \frac{\bar{\lambda}_{max}}{q^{2k+2}}\leq  \frac{\varphi^{2k-2}}{q^{2k+2}}(q^4+3 q^2 +1).
$$
Consequently if
$$\frac{\varphi^{2k-2}}{q^{2k+2}}(q^4+3 q^2 +1)<1$$
then $||A^k(q)||<1$. To solve above inequality with respect to $k$ we apply the logarithm, requiring that the final report is less than  0:
$$
2k\ln\left(\frac{\varphi}{q}\right)+ \ln\left(\frac{1}{\varphi^2 q^2}(q^4+3 q^2 +1)\right)< 0.
$$
We finally obtain that if
$$
k>\frac{\ln\left(\frac{1}{\varphi^2 q^2}(q^4+3 q^2 +1)\right)}{2\left(\ln q - \ln \varphi\right)}
$$
then $||A^k(q)||<1$ and hence the claim.
\end{proof}
It is well-known that $f_k$ is the closest integer to ${\frac  {\varphi ^{k}}{{\sqrt  5}}}$.
Therefore it can be found by rounding in terms of the nearest integer function: $f_{k}={\bigg [}{\frac  {\varphi ^{k}}{{\sqrt  5}}}{\bigg ]},\ k\geq 0$.
That gives a very sharp inequality. In fact, if $k$ is an even number, then
$f_{k}={\bigg [}{\frac  {\varphi ^{k}}{{\sqrt  5}}}{\bigg ]}< \frac  {\varphi ^{k}}{{\sqrt  5}}$
i.e.
$f_{2k}={\bigg [}{\frac  {\varphi ^{2k}}{{\sqrt  5}}}{\bigg ]}< \frac  {\varphi ^{2k}}{{\sqrt  5}}$. We notice that $\frac  {\varphi ^{k}}{{\sqrt  5}}<\varphi ^{k-1}$. By the same procedure applied previously, we get
$$
\bar{\lambda}_{max}\leq \frac{q^2}{5}\left(\varphi^{2k-2}+\varphi^{2k+2}\right)+\frac{\varphi^{2k}}{5} (q^4+1).
$$
We have
$$
\begin{array}{c}
   \frac{\bar{\lambda}_{max}}{q^{2k+2}}\leq\left(\frac{\varphi}{q}\right)^{2k} \frac{1}{5q^2}\left(\frac{q^2}{\varphi^2}+q^2\varphi^2+q^4+1\right)\leq 1 \\
  \Leftrightarrow 2k\ln\left(\frac{\varphi}{q}\right)+ \ln\left(\frac{1}{5\varphi^2}+\frac{\varphi^2}{5}+\frac{q^2}{5}+\frac{1}{5 q^2}\right)\leq 0
\end{array}
$$
whence
\begin{equation}\label{ksecond}
k\geq\frac{\ln\left(\frac{1}{5\varphi^2}+\frac{\varphi^2}{5}+\frac{q^2}{5}+\frac{1}{5 q^2}\right)}{2\left(\ln q - \ln \varphi\right)} \ \ \ (q>\varphi)
\end{equation}
for $k$ even.

\begin{remark}
Now we want to compare the values ​​of $k(q)$, and suppose that $k(q)$ of (\ref{ksecond}) is greater than (\ref{kfirst}).
$$\frac{\ln\left(\frac{1}{5\varphi^2}+\frac{\varphi^2}{5}+\frac{q^2}{5}+\frac{1}{5 q^2}\right)}{2\left(\ln q - \ln \varphi\right)}>\frac{\ln\left(1+\frac{1}{\varphi^4}+\frac{q^2}{\varphi^2}+\frac{1}{q^2\varphi^2}\right)}{2\left(\ln q - \ln \varphi\right)}$$
i.e.
$$q^4(\varphi^4-5\varphi^2)+q^2 (\varphi^2+\varphi^6-5\varphi^4-5)-5\varphi^2+\varphi^4>0$$
which doesn't admit solution. Then
$$\frac{\ln\left(\frac{1}{5\varphi^2}+\frac{\varphi^2}{5}+\frac{q^2}{5}+\frac{1}{5 q^2}\right)}{2\left(\ln q - \ln \varphi\right)}<\frac{\ln\left(1+\frac{1}{\varphi^4}+\frac{q^2}{\varphi^2}+\frac{1}{q^2\varphi^2}\right)}{2\left(\ln q - \ln \varphi\right)}.$$
\end{remark}

\section{Conclusions}

We studied the workspace of a hyper-redundant manipulator, modeling a robot tentacle. We give a formal proof of the results, highlighted by numerical simulations based on a fractal geometry approach. The novelty of the paper consist to associate a linear control system involving Fibonacci sequence to the workspace of robot tentacle. We finally notice that, by the arbitrariness of the number of links, the asymptotic properties of the model (e.g. the possibility of setting an arbitrary global length for the manipulator)  extend by approximation to the case with a finite number of links with arbitrary small tolerance.

Chirikjian and Burdick's seminal report, \cite{term}, presented a kinematic algorithms for implementing planar hyper-redundant manipulator obstacle avoidance, and it suggests to us a further extension of this paper. In \cite{Li04} one has proposed to control the modularized hyper-redundant manipulators, obtaining the inverse kinematics solution of the planar hyper-redundant at the position and velocity levels; herein we can observe, for instance, the manipulator's configuration when the number of links is $4$ or $6$ and its length is $1$. Still we observe how, in this work, the length of the links is fixed, unlike approach showed here, where the lengths are controllable according to Fibonacci sequence.




\begin{thebibliography}{}



\bibitem[1]{snake}
V. V. Anderson, and R. C. Horn.
\newblock{Tensor-arm manipulator design.}
\newblock{\em {American Society of Mechanical Engineers,}} 67-DE-75: 1--12, 1967.


%
%

\bibitem[2]{Bai86}
 J. Baillieul.
\newblock Avoiding obstacles and resolving kinematic redundancy.
\newblock {\em  IEEE International Conference on Robotics and Automation,} 3: 1698--1704, 1986.
%


\bibitem[3]{Ball99}
Ball, Philip, and Neil R. Borley. The self-made tapestry: pattern formation in nature. \emph{Oxford: Oxford University Press}, 198, 1999.



\bibitem[4]{Bur88}
J. W. Burdick.
\newblock{Kinematic analysis and design of redundant robot manipulators.}
\newblock{{\em Diss. Stanford University}, 1988}.



\bibitem[5]{term}
 G. S. Chirikjian, and J. W. Burdick.
\newblock{An obstacle avoidance algorithm for hyper-redundant manipulators.}
\newblock{{\em IEEE International Conference on Robotics and Automation}, 1: 625--631, 1990.}

\bibitem[6]{CB95}
 G. S. Chirikjian, and J. W. Burdick.
\newblock{The kinematics of hyper-redundant robot locomotion.}
\newblock{{\em IEEE Transactions on Robotics and Automation,} 11.6: 781--793, 1995.}




\bibitem[7]{Cho99}
H. Choset and W. Henning.
\newblock A follow-the-leader approach to serpentine robot motion planning.
\newblock {\em J. Aerosp. Eng.}, 12(2): 65–-73, 1999.

\bibitem[8]{CP01}
Y.~Chitour and B.~Piccoli.
\newblock Controllability for discrete control systems with a finite control
  set.
\newblock {\em Mathematics of Control Signal and Systems}, 14: 173--193, 2001.






\bibitem[9]{IC96}
I. Ebert-Uphoff  and G. S. Chirikjian.
\newblock{Inverse kinematics of discretely actuated hyper-redundant manipulators using workspace densities.}
\newblock{{\em IEEE International Conference on Robotics and Automation}, 1: 149--145, 1996.}




\bibitem[10]{EK98}
P.~Erd{\"o}s and V.~Komornik.
\newblock Developments in non-integer bases.
\newblock {\em Acta Math. Hungar.}, 79(1-2): 57--83, 1998.

\bibitem[11]{Fal90}
K. Falconer.
\newblock Fractal geometry: mathematical foundations and applications.
\newblock{ \em John Wiley \and Sons}, 2013.




\bibitem[12]{Gil81}
W.~J. Gilbert.
\newblock Geometry of radix representations.
\newblock {\em The geometric vein}, 129--139. Springer, New York, 1981.

\bibitem[13]{Gil87}
W.~J. Gilbert.
\newblock Complex bases and fractal similarity.
\newblock {\em Ann. Sci. Math. Qu{\'e}bec}, 11.1: 65--77, 1987.

\bibitem[14]{IKR92}
K.-H. Indlekofer, I.~K{\'a}tai, and P.~Racsk{\'o}.
\newblock Number systems and fractal geometry.
\newblock {\em Probability theory and applications}, Springer Netherlands, 319--334, 1992.



\bibitem[15]{Hut81}
J. Hutchinson.
\newblock{Fractals and self-similarity}
\newblock{\em Indiana Univ. J. Math}, 30: 713--747, 1981.

\bibitem[16]{KL07}
V.~Komornik and P.~Loreti.
\newblock Expansions in complex bases.
\newblock {\em Canad. Math. Bull.}, 50.3: 399--408, 2007.



\bibitem[17]{Kwon91} S.J. Kwon, W.K.Chung, Y.Youm, and M.S.Kim. Self-collision avoidance for n-link redundant manipulators. \emph{Proceedings of the IEEE International Conference on System, Man and Cybernetics}, 937-942, Charlottesville, USA, October 1991.

\bibitem[18]{Lai11}
A.~C. Lai.
\newblock Geometrical aspects of expansions in complex bases.
\newblock {{\em Acta Mathematica Hungarica} 136.4: 275--300, 2012.}

\bibitem[19]{LL11}
A.~C. Lai and P.~Loreti.
\newblock Robot's finger and expansions in non-integer bases.
\newblock {\em Networks and Heterogeneus Media}, 7.1: 71--111, 2011.

\bibitem[20]{LL11a}
A.~C. Lai and P.~Loreti.
\newblock Robot's hand and expansions in non-integer bases.
\newblock {  Discrete Mathematics \and Theoretical Computer Science, 16(1), 371--394, 2014.}


\bibitem[21]{LL12}
A.~C. Lai and P.~Loreti.
\newblock Discrete asymptotic reachability via expansions in non-integer bases.
\newblock {\em Proceedings of 9-th International Conference on Informatics in Control, Automation and Robotics}, 2012.

\bibitem[22]{LLV14}
A.~C. Lai, P.~Loreti and P.~Vellucci.
\newblock  A model for Robotic Hand based on Fibonacci sequence.
\newblock {\em Proceedings of the 11-th International Conference on Informatics in Control, Automation and Robotics}, 577--587, 2014.


\bibitem[23]{Li04}
J. Liu, Y. Wang, S. Ma, B. Li.
\newblock Shape control of hyper-redundant modularized manipulator using variable structure regular polygon.
\newblock {\em Intelligent Robots and Systems,} 4: 3924--3929, 2004.




\bibitem[24]{Park03}
A. E. Park, J.J. Fernandez, K. Schmedders, M.S. Cohen. The Fibonacci sequence: relationship to the human hand, \emph{J. Hand Surg. Am.} 28 (1): 157-160, 2003.

%

\bibitem[25]{Par60}
W.~Parry.
\newblock On the {$\beta $}-expansions of real numbers.
\newblock {\em Acta Math. Acad. Sci. Hungar.}, 11:401--416, 1960.

%
%

\bibitem[26]{Ren57}
A.~R{\'e}nyi.
\newblock Representations for real numbers and their ergodic properties.
\newblock {\em Acta Math. Acad. Sci. Hungar}, 8:477--493, 1957.





\bibitem[27]{Wi12}
J. Wille, Occurrence of Fibonacci numbers in development and structure of animal forms: Phylogenetic observations and epigenetic significance. \emph{Natural Science}, 4: 216-232, 2012.





\end{thebibliography}
\end{document}